\newcommand{\BlackBoxes}{\global\overfullrule5pt}
\newcommand{\R}{\mathbb{R}} % reelle
\newcommand{\dx}{\mathrm{d}}
\newcommand{\PP}{\mathbb{P}}
\newcommand{\E}{\mathbb{E}}
\newcommand{\st}{\text{s.t. }}
\newcommand{\Xpi}[2]{X_{#2}^{#1,\pi^{#1}}}
\newcommand{\amax}{\alpha_{\text{max}}}
\newcommand{\dmax}{\delta_{\text{max}}}
\newtheorem{theorem}{Theorem}
\newtheorem{lemma}[theorem]{Lemma}
\newtheorem{proposition}[theorem]{Proposition}
\theoremstyle{definition}
\newtheorem{example}[theorem]{Example}
\newtheorem{remark}[theorem]{Remark}
\newtheorem{definition}[theorem]{Definition}
\numberwithin{equation}{section} \numberwithin{theorem}{section}
\def\0{\kern0pt\-\nobreak\hskip0pt\relax}
\def\makeoverbar#1#2#3#4#5#6#7{ \setbox0=\hbox{$\m@th#2\mkern#5mu{{}#3{}}\mkern#6mu$} \setbox1=\null \dimen@=#4\fontdimen8#13 \dimen@=3.5\dimen@
\advance\dimen@ by \ht0 \dimen@=-#7\dimen@ \advance\dimen@ by \wd0
\ht1=\ht0 \dp1=\dp0 \wd1=\dimen@
\dimen@=\fontdimen8#13 \fontdimen8#13=#4\fontdimen8#13
\rlap{\hbox to \wd0{$\m@th\hss#2{\overline{\box1}}\mkern#5mu$}}
\fontdimen8#13=\dimen@}
\def\mylabel#1#2{{\def\@currentlabel{#2}\label{#1}}}
\begin{document}
\title[Nash equilibria for relative investors with (non)linear price impact]{Nash equilibria for relative investors with (non)linear price impact}
\author[N. \smash{B\"auerle}]{Nicole B\"auerle}
\address[N. B\"auerle]{Department of Mathematics,
Karlsruhe Institute of Technology (KIT), D-76128 Karlsruhe, Germany, ORCID 0000-0003-0077-3444}

\email{\href{mailto:nicole.baeuerle@kit.edu}{nicole.baeuerle@kit.edu}}

\author[T. \smash{G\"oll}]{Tamara G\"oll}
\address[T. G\"oll]{Department of Mathematics,
Karlsruhe Institute of Technology (KIT), D-76128 Karls\-ruhe, Germany, ORCID 0009-0007-8341-723X}

\email{\href{mailto:tamara.goell@kit.edu} {tamara.goell@kit.edu}}

% \thanks{${}^*$ Department of Mathematics,
% Karlsruhe Institute of Technology (KIT), D-76128 Karlsruhe, Germany}
% \thanks{${}^\dagger$ Department of Mathematics, Karlsruhe Institute of Technology (KIT), D-76128 Karlsruhe, Germany}

\begin{abstract}
We consider the strategic interaction of $n$ investors who are able to influence a stock price process and at the same time measure their utilities relative to the other investors. Our main aim is to find Nash equilibrium investment strategies in this setting in a financial market driven by a Brownian motion and investigate the influence the price impact has on the equilibrium. We consider both CRRA and CARA utility functions. Our findings show that the problem is well-posed as long as the price impact is at most linear. Moreover, numerical results reveal that the investors behave very aggressively when the price impact is close to a critical parameter. 
\end{abstract}
\maketitle

%\listoftodos

\makeatletter \providecommand\@dotsep{5} \makeatother
%\listoftodos[Changes in Orange/Red To Do List in Green / Blue]\relax

%\address[N. B\"auerle]{Department of Mathematics,
%Karlsruhe Institute of Technology, D-76128 Karlsruhe, Germany}

%\email{\href{mailto:nicole.baeuerle@kit.edu}{nicole.baeuerle@kit.edu}}

\vspace{0.5cm}
\begin{minipage}{14cm}
{\small
\begin{description}
\item[\rm \textsc{ Key words}]
{\small Portfolio optimization; Price Impact; Nash equilibrium; Relative investor}
\item[\rm \textsc{JEL subject classifications}] {C61, C73, G11}
%{\small 91A35, 91A16, 91G20}

\end{description}
}
\end{minipage}

\section{Introduction}

In this paper, we determine the optimal investment strategies of $n$ investors in a common financial market who interact strategically. The strategic interaction is caused by two different factors: a relative component inside the objective function of each investor and by the fact that the stock price dynamic is affected by the arithmetic mean of the $n$ agents' investments.

We contribute to two strands of literature. The first one is the literature on strategic interaction between agents. Strategic interaction in portfolio optimization problems has been motivated for example by \cite{brown2001careers} and \cite{kempf2008tournaments} through competition between agents. Since then, portfolio choice problems including strategic interaction between investors have been widely studied. The competitive feature is usually modeled through a relative performance metric. More specifically, either the additive relative performance metric, introduced by \cite{espinosa2010stochastic, espinosa2015optimal}, or the multiplicative performance metric, introduced by \cite{basak2014strategic}, are included into the utility function. \cite{basak2015competition} consider two agents in a continuous-time model which includes stocks following geometric Brownian motions. They use power utility functions and maximize the ratio of the two investors' wealth. \cite{espinosa2015optimal} also consider stocks driven by geometric Brownian motions and $n$ agents maximizing a weighted difference of their own wealth and the arithmetic mean of the other agents' wealth. Structurally similar objective functions including the arithmetic mean have been used by \cite{bauerle2021nash}. There, the unique Nash equilibrium for $n$ agents is derived in a very general financial market using the unique solution to some auxiliary classical portfolio optimization problem. \cite{lacker_zariphopoulou_n-agent_nash} consider the case of asset specialization for $n$ agents. They derive the unique constant Nash equilibrium using both the arithmetic mean under CARA utility and the geometric mean under CRRA utility. Later, their work has been extended by \cite{lacker2020many} to consumption-investment problems including relative concerns. In a similar asset specialization market with bounded market coefficients, \cite{fu2023mean2, fu2023mean} find a one-to-one correspondence between Nash equilibria and suitable systems of FBSDE's for agents applying power utilities to the multiplicative relative performance metric in order to find optimal investment (and consumption) strategies. \cite{dos2019forward, reis2020forward} use forward utilities of both CARA and CRRA type with and without consumption. More general financial markets (including e.g. stochastic volatility and incomplete information) were, for example, used in \cite{kraft2020dynamic}, \cite{fu2020mean} and \cite{hu2021n}. 

The second strand of literature focuses on (large) investors whose trades affect the price processes of certain assets. For an overview on reasons for the existence and methods to incorporate price impact, we refer to \cite{bouchaud2009price} and  \cite{webster2023handbook}. \cite{jarrow1992market, jarrow1994derivative}  considers a discrete time market model in which a single large trader affects the price of the risky asset. He finds conditions under which there are no arbitrage opportunities for small traders while the large trader is able to achieve riskless profit using some market manipulation strategy. \cite{almgren2001optimal} introduce a discrete-time financial market in which the price process of the risky stock is affected by the investment of a large investor. The impact is divided into \emph{temporary} and \emph{permanent} price impact. They minimize risk and transaction costs arising from the price impact simultaneously. Models including temporary and permanent price impact were also used by, among others, \cite{schied2017high, schied2017state, schied2019market}. In \cite{bertsimas1998optimal}, the problem of minimizing the expected cost of liquidating a block of shares over a fixed time interval is solved in a discrete time financial market. Here, the number of shares held by a large trader impacts the stock price process linearly.

\cite{cvitanic1996hedging} assume that the investment of a single large investor affects the interest rate of a riskless asset and the drift and volatility of stock price processes, which are modeled by It\^{o}-diffusions, simultaneously. They allow for general square integrable strategies and extend classical results of hedging contingent claims to their setting. A similar model including stocks paying dividends was used by \cite{cuoco1998optimal}. In their setting, the volatility of the stock prices does not depend on the large investors portfolio and they determine the optimal consumption strategy of the large investor.  \cite{bank2004hedging} use a more general continuous-time model for the stock prices, but only allow for constant portfolio processes. They prove necessary and sufficient conditions for the absence of arbitrage for both small and large investors. \cite{liu2005option} consider a Black-Scholes-type stock price dynamic where the investor's impact is modeled by a general price impact function integrated with respect to an It\^{o} process which models the investment of the large agent. After introducing their market model, they show how to price European options defined therein. \cite{eksi2017portfolio} also consider a Black-Scholes-type price process in which the drift is (possibly nonlinearly) affected by the large investor's trades and also contains a stochastic component which depends on the current market state. They maximize expected utility of the large investor under both complete and incomplete information. A problem of optimal liquidation in another Black-Scholes-type market is treated in \cite{he2005dynamic}. Here, the stock price depends linearly on the dynamics of the large investor's selling process. \cite{kraft2011large} maximize expected utility in a financial market similar to the one treated in this paper. They model the price process as a geometric Brownian motion by adding a multiple of the large trader's investment to the constant drift. A different approach to model price impact was used by \cite{bank2023optimal}. There, the large trader has additional information on the course of the future stock price and price impact is introduced as a penalty to exclude arbitrage opportunities.

The majority of literature considers the case of a single large trader. \cite{schoneborn2009liquidation}, however, consider a continuous time financial market where the price impact - both temporary and permanent - results from the investment of $n+1$  'strategic players'. Additionally, so-called market impact games, in which a finite number of large traders aims to minimize their liquidation/execution cost, have for example been considered by \cite{schied2017high, schied2019market, luo2019nash, fu2021mean}. Moreover, \cite{curatola2021price} considers two agents who interact strategically through their linear impact on the return of the risk free asset. Maximizing their terminal wealth under CRRA utility, he derives the unique constant pure-strategy Nash equilibrium. Risk-averse investors competing to maximize expected utility of terminal wealth have also been considered by \cite{schied2017state}.

In the following, we solve an $n$-agent portfolio problem with relative performance concerns where we allow the agents to jointly influence the asset dynamics, which is reasonable if $n$ is large, and which has not been done before.

This paper is organized as follows. In the next section, we introduce the linear price impact financial market. In Section \ref{sec:exp}, we explicitly solve the problem of maximizing expected exponential utility which results in the unique constant Nash equilibrium. The argument of the utility function consists of the difference of some agents' wealth and a weighted arithmetic mean of the other agents' wealth. We also examine the influence of the price impact parameter $\alpha$ to the Nash equilibrium and the stock price attained by inserting the arithmetic mean of the components of the Nash equilibrium. In Section \ref{sec:nonlinear}, we substitute the linear impact of the agents arithmetic mean on the stock price process by a nonlinear one. We prove that the problem of maximizing CARA utility is well-posed as long as the influence is sublinear and does not have an optimal solution if the influence is superlinear. In Section \ref{sec:power}, we assume that agents use CRRA utility functions (power and logarithmic utility) and insert the product of some agents wealth and a weighted geometric mean of the other agents' wealth into the expected utility criterion. Similar to the CARA case, we are able to explicitly determine the unique constant Nash equilibrium.

\section{Price impact market}\label{sec:price impact market}

Let $(\Omega, \mathcal{F}, (\mathcal{F}_t)_{t\in [0,T]}, \PP)$ be a filtered probability space and $T>0$ a finite time horizon. Moreover, let $W$ be a standard Brownian motion on $(\Omega, \mathcal{F}, (\mathcal{F}_t)_{t\in [0,T]}, \PP)$.

The underlying financial market consists of one riskless bond which will for simplicity be assumed to be identical to 1, and one risky asset (a stock). Note that it is straightforward to extend the results below to the case of $d>1$ stocks instead of just one (see \cite{goell2023expected} for analogous results including multiple stocks). However, to keep calculations simple, we only consider one stock.

The price process of the stock, denoted by $(S_t)_{t\in [0,T]}$, is the solution to the SDE
\begin{equation}
\dx S_t = S_t \left(\left(\mu + \alpha \bar{\pi}_t\right)\dx t +  \sigma \dx W_t\right),\, S(0)=1.\label{eq:price_impact_dynamic}
\end{equation}
Here, the drift $\mu>0$ and volatility $\sigma>0$ are assumed to be deterministic and constant in time. Our model describes a special case of the models considered by \cite{cvitanic1996hedging}, \cite{cuoco1998optimal} and \cite{kraft2011large}. Note that, instead of just one large investor, we consider the case of $n$ agents who collectively act like one large investor.

The expression $\bar{\pi}_t$ will describe the arithmetic mean of the investment of $n$ investors into the stock at time $t\in [0,T]$, i.e. 
\begin{equation}
\bar{\pi}_t \coloneqq \frac{1}{n}\sum_{i=1}^n \pi^i_t,
\end{equation}
where $\pi^i_t$ describes either the amount or the fraction of wealth agent $i$ invests into the stock at some time $t\in [0,T]$. 
The strategies $\pi^i$ of the $n$ investors are assumed to belong to the set $\mathcal{A}$ of $(\mathcal{F}_t)_{t \in [0,T]}$-progressively measurable, square-integrable processes, i.e. 
\begin{equation}
    \mathcal{A}\coloneqq \left\{\pi:\Omega \times [0,T] \to \R:\, \pi \text{ is } (\mathcal{F}_t)_{t\in [0,T]}\text{-progressively measurable, } \int_0^T \pi_t^2 \dx t < \infty \,\, \PP\text{-a.s.}\right\}.
\end{equation}
%This assumption ensures that the SDE \eqref{eq:price_impact_dynamic} has a unique solution (see e.g. \cite{karatzas1998methods}).
Further, let the initial capital of agent $i$ be given by $x_0^i$.

Finally, $\alpha \in \R$ is some constant that describes the impact of the investment of the $n$ investors into the stock. 

\begin{remark}
\begin{itemize}
    \item[a)] Some authors argue that $\alpha$ should take both positive and negative values due to the fact that (large) investors may have both positive and negative impact on stock returns (see e.g. \cite{cronqvist2008large}, \cite{curatola2019portfolio}). On the other hand, \cite{bank2004hedging} prove in a more general setting that stock prices need to be increasing in terms of some large investor's investment. Otherwise it would be possible to construct some 'In \& Out' arbitrage strategy. However, such arbitrage strategies arise due to the direct change in the share price in their model and are therefore not an issue in our case.  Moreover, since the optimization problems considered in this paper have finite optimal solutions, our model appears to be free of arbitrage. Hence, we allow for both positive and negative values for $\alpha$.
    %Although \cite{bank2004hedging} only consider the price impact of a single large investor, their arguments directly apply to our setting since the collection of small investors acts like a single large investor regarding the impact on the stock price processes. 
    \item[b)] Assuming that the drift of the risky stock depends linearly on the agents' investment makes the model mathematically tractable and can be seen as a first order approximation of nonlinear price impact (see \cite{kraft2011large}). However, empirical data suggests that price impact is concave in order size (see \cite{muhle2022stochastic} and references therein). Thus, we also consider the case of nonlinear price impact if investors use exponential utility functions (see Section \ref{sec:nonlinear}).
\end{itemize}

\end{remark}

\section{Optimization under CARA utility with linear price impact}\label{sec:exp}

At first, we assume that investors use exponential utility (CARA) functions to measure their preferences. Hence, define 
\begin{equation}
    U_i:\R \to \R, \, x\mapsto -\exp\Big(-\frac{1}{\delta_i}x\Big)
\end{equation}
for some parameter $\delta_i>0$, $i=1,\ldots,n.$ While using CARA utility functions, it is more convenient to consider the amount invested into the risky stock instead of the fraction of wealth or number of shares. Hence, we interpret $\pi^i_t$ as the amount of money agent $i$ invests into the risky stock at some $t\in [0,T],\, i=1,\ldots,n.$ Thus, the wealth process of agent $i$ is given by 
\begin{equation}
X_t^{i, \pi^i} = x_0^i + \int_0^t \pi^i_s \left(\left(\mu + \alpha \bar{\pi}_s\right)\dx s +  \sigma \dx W_s\right),\ t\in [0,T].
\end{equation}

In this paper, we want to examine the strategic interaction created by the price impact introduced earlier and a modification of the classical objective function used in expected utility maximization. Hence, we substitute the terminal wealth of a single investor inside the expected utility criterion by a relative quantity (a relative performance metric) which captures the fact that agent $i$ wants to maximize her terminal wealth while also considering her performance with respect to the other agents. Similar to \cite{bauerle2021nash} and Section 2 in \cite{lacker_zariphopoulou_n-agent_nash}, we use the difference of agent $i$'s terminal wealth and a weighted arithmetic mean of the other agents' terminal wealth. 
Hence, we insert $$X_T^{i,\pi^i} - \frac{\theta_i}{n}\sum_{j\neq i}X_T^{j,\pi^j}$$ into the argument of the utility function of investor $i$. The parameter $\theta_i\in [0,1]$ measures how much agent $i$ cares about her performance with respect to the other agents.

Our goal will therefore be to find all Nash equilibria to the multi-objective optimization problem
\begin{equation}\label{eq: price impact nash}
\begin{cases}
& \sup_{\pi^i \in \mathcal{A}} \E\left[-\exp\left( -\frac{1}{\delta_i}\left(X_T^{i,\pi^i} - \frac{\theta_i}{n}\sum_{j\neq i} X_T^{j,\pi^j}\right)\right) \right],\\
\st & X^{i,\pi^i}_T = x_0^i +  \displaystyle\int_0^T \pi^i_t\left(\left(\mu + \alpha \bar{\pi}_t\right) \dx t + \sigma \dx W_t \right),
\end{cases}
\end{equation}
$i=1,\ldots,n$. A Nash equilibrium for general objective functions $J_i$, $i=1,\ldots,n,$ is defined as follows. 
\begin{definition}\label{definition nash}
Let $J_i:\mathcal{A}^n \rightarrow \R$ be the objective function of agent $i$. A vector $\left(\pi^{1,*},\ldots,\pi^{n,*}\right)$ of strategies is called a \emph{Nash equilibrium}, if, for all admissible $\pi^i \in \mathcal{A}$ and $i=1,\ldots,n$, 
\begin{equation}
J_i(\pi^{1,*},\ldots,\pi^{i,*},\ldots,\pi^{n,*})\geq J_i(\pi^{1,*},\ldots,\pi^{i-1,*},\pi^i,\pi^{i+1,*},\ldots,\pi^{n,*}). \label{Nash condition}
\end{equation}
\end{definition}
I.e.\ deviating from $\pi^{i,*}$ does not increase agent $i$'s objective function. 

\subsection{Solution}
In order to solve the best response problem \eqref{eq: price impact nash}, we fix some investor $i$ and assume that the strategies $\pi^j$, $j\neq i$, of the other agents are given. Under these conditions we can rewrite the optimization problem \eqref{eq: price impact nash} into a classical portfolio optimization problem in a similar (but not identical) price impact market. Afterwards, the Nash equilibria can be determined using the solution to the classical problem. \medskip

Define the process $\big(Y_t^{i,\varphi^i}\big)_{t\in [0,T]}$ by 
\begin{equation}\label{eq: def Y_i}
Y_t^{i,\varphi^i} \coloneqq X^{i,\pi^i}_t - \frac{\theta_i}{n}\sum_{j\neq i} X^{j,\pi^j}_t,\, t\in [0,T], \,  i=1,\ldots,n,
\end{equation}
where we further define the strategy $\varphi^i$ by 
\begin{equation}\label{eq: def strategy Y_i}
\varphi^i_t \coloneqq \pi^i_t- \frac{\theta_i}{n}\sum_{j \neq i} \pi^j_t, \, t\in [0,T],\, i=1,\ldots,n,
\end{equation}
which is still square integrable and progressively measurable (i.e., $\varphi^i\in \mathcal{A}$). Then we can write $Y_T^{i,\varphi^i}$ as
\begin{align*}
Y_T^{i,\varphi^i} &= X^{i,\pi^i}_T - \frac{\theta_i}{n}\sum_{j\neq i} X^{j,\pi^j}_T \\
&\eqqcolon y_0^i + \int_0^T \varphi^i_t\left( \left(\widetilde{\mu}^{-i}_t + \frac{\alpha}{n}\varphi^i_t \right)\mathrm{d}t + \sigma \dx W_t\right),
\end{align*} 

where we introduced $y_0^i \coloneqq x_0^i - \frac{\theta_i}{n}\sum_{j\neq i} x_0^j$, $\bar{\pi}^{-i}_t \coloneqq \frac{1}{n}\sum_{j\neq i} \pi^j_t$ and $\widetilde{\mu}^{-i}_t \coloneqq \mu  + \alpha \frac{n+\theta_i}{n}\bar{\pi}^{-i}_t$.

Hence, in order to solve the best response problem associated to \eqref{eq: price impact nash}, we can equivalently solve the following single investor portfolio optimization problem due to the one-to-one relation between $\pi^i$ and $\varphi^i$
\begin{equation}\label{eq: price impact classic}
\begin{cases}
& \sup_{\varphi^i\in \mathcal{A}} \E\left[-\exp\left( -\frac{1}{\delta_i}Y_T^{i,\varphi^i}\right) \right],\\
\st & Y_T^{i,\varphi^i} = y_0^i + \int_0^T\varphi^i_t\left( \left(\widetilde{\mu}^{-i}_t + \frac{\alpha}{n}\varphi^i_t \right)\mathrm{d}t + \sigma\dx W_t\right),\, t\in [0,T],
\end{cases}
\end{equation}
in a financial market with corrected price impact. \medskip

Now assume that $\varphi^{i,*} = \varphi^{i,*}(\widetilde{\mu}^{-i})$ is an optimal solution to \eqref{eq: price impact classic} depending on the drift process $\widetilde{\mu}^{-i}$. Then the optimal solution to the best response problem for \eqref{eq: price impact nash} is uniquely determined by
\begin{equation}\label{eq: price impact system of equations}
\pi^i = \varphi^{i,*}(\widetilde{\mu}^{-i}) +  \frac{\theta_i}{n} \sum_{j\neq i} \pi^j,\, i=1,\ldots,n.
\end{equation}

Note that we can find a unique Nash equilibrium if and only if problem  \eqref{eq: price impact classic} and the fixed point problem for $\pi^i$, given in terms of the system of equations  \eqref{eq: price impact system of equations}, are uniquely solvable. 

Using the described technique, we are able to find the unique constant Nash equilibrium. Note that the restriction to constant Nash equilibria is necessary since otherwise we would not be able to solve the auxiliary problem \eqref{eq: price impact classic} explicitly.

As a first step, we solve the auxiliary problem \eqref{eq: price impact classic} for investor $i$ under the assumption that the strategies of the other investors are constant in time and deterministic.

\begin{lemma}\label{lemma:loesung_hilfsproblem}
Let $\theta_i\in [0,1]$ and $\delta_i>0$, $i=1,\ldots,n$. Moreover, assume that $n\sigma^2 - 2\delta_i\alpha>0$ for all $i=1,\ldots,n$. If, for some $i\in \{1,\ldots,n\}$, the strategies $\pi^j$, $j\neq i$, are constant in time and deterministic, the unique (up to sets of measure zero) optimal solution to \eqref{eq: price impact classic}  is given by 
\begin{equation}
    \varphi^{i,*}_t \equiv \frac{n\delta_i \widetilde{\mu}^{-i}}{n\sigma^2 - 2\delta_i \alpha},\, t\in [0,T].
\end{equation}
\end{lemma}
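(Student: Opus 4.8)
The plan is to solve the single-investor problem \eqref{eq: price impact classic} explicitly by the standard dynamic programming / HJB approach, exploiting that the corrected drift $\widetilde{\mu}^{-i}$ is now a deterministic constant. Since the other agents play constant deterministic strategies, $\bar{\pi}^{-i}$ is a constant, hence $\widetilde{\mu}^{-i} = \mu + \alpha\frac{n+\theta_i}{n}\bar{\pi}^{-i}$ is a fixed real number and the state $Y^{i,\varphi^i}$ has dynamics $\dx Y_t = \varphi^i_t\big((\widetilde{\mu}^{-i} + \frac{\alpha}{n}\varphi^i_t)\dx t + \sigma\dx W_t\big)$. The key structural feature is that the drift is quadratic in the control $\varphi^i_t$ because of the self-impact term $\frac{\alpha}{n}\varphi^i_t$, which is what distinguishes this from a classical Merton problem.

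First I would make the exponential-utility ansatz for the value function $v(t,y) = -\exp\big(-\frac{1}{\delta_i}y + g(t)\big)$ with terminal condition $g(T)=0$, reflecting the CARA structure and the fact that wealth enters linearly. Substituting into the HJB equation
\begin{equation}
v_t + \sup_{\varphi\in\R}\Big( \varphi\big(\widetilde{\mu}^{-i} + \tfrac{\alpha}{n}\varphi\big) v_y + \tfrac{1}{2}\sigma^2\varphi^2 v_{yy}\Big) = 0,
\end{equation}
I would compute the inner supremum over $\varphi$. The first-order condition gives a linear equation in $\varphi$ whose solution, after inserting $v_y = \frac{1}{\delta_i}(-v)\cdot(-1)$ and $v_{yy}$, should produce exactly the candidate $\varphi^* = \frac{n\delta_i\widetilde{\mu}^{-i}}{n\sigma^2 - 2\delta_i\alpha}$. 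The condition $n\sigma^2 - 2\delta_i\alpha > 0$ is precisely what guarantees that the coefficient of $\varphi^2$ inside the supremum is strictly negative, so that the quadratic in $\varphi$ is concave and the maximizer is a genuine (finite) maximum rather than a saddle or a direction of escape to $+\infty$. This concavity check is where the hypothesis is used, and I expect it to be the conceptual heart of the argument.

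After identifying the candidate maximizer, I would substitute it back to obtain an ODE for $g(t)$ of the form $g'(t) = c$ for a constant $c$ depending on the parameters, solve it with $g(T)=0$, and verify that $v$ solves the HJB equation. The remaining step is a verification argument: I would confirm $\varphi^*$ is admissible (constant, hence in $\mathcal{A}$), and show by a standard martingale/supermartingale argument that $v(0,y_0^i)$ dominates the objective for every competing $\varphi\in\mathcal{A}$ with equality at $\varphi^*$. The main obstacle I anticipate is the verification step rather than the formal computation: because the controlled drift is quadratic in $\varphi$, the process $\exp(-\frac{1}{\delta_i}Y^{i,\varphi}_t + g(t))$ is only guaranteed to be a local supermartingale, so one must rule out the possibility that an exotic admissible strategy does better by localizing and applying Fatou's lemma together with a suitable integrability/uniform-integrability estimate. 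Uniqueness up to null sets then follows from the strict concavity of the maximized Hamiltonian in $\varphi$, which forces the optimal control to coincide with $\varphi^*$ almost everywhere.
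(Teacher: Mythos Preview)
Your proposal is correct and follows essentially the same route as the paper: both set up the HJB equation for the controlled process $Y^{i,\varphi^i}$, use the separable exponential ansatz $v(t,y)=-f(t)\exp(-y/\delta_i)$ (your $g(t)$ is just $\log f(t)$), identify the maximizer $\varphi^*$ from the first-order condition---with the hypothesis $n\sigma^2-2\delta_i\alpha>0$ ensuring the Hamiltonian is strictly concave in $\varphi$---solve the resulting linear ODE for $f$, and then appeal to a standard verification theorem together with strict concavity for uniqueness. The only cosmetic difference is that the paper cites a textbook verification result directly, whereas you sketch the localization/Fatou argument; the substance is the same.
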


\begin{proof}

Since $\pi^j$, $j\neq i$, are constant, the drift $\widetilde{\mu}^{-i}$ is also constant. The dynamics of the wealth process $Y^{i,\varphi^i}$ are therefore given by 
\begin{align*}
\mathrm{d}Y_t^{i,\varphi^i} =  \varphi^i_t \left(\left(\widetilde{\mu}^{-i} + \frac{\alpha}{n} \varphi^i_t\right)\mathrm{d}t +  \sigma \mathrm{d}W_t\right),\, t\in [0,T].
\end{align*}

To derive the associated HJB equation used to solve the portfolio optimization problem, we define the value function
\begin{equation}
    v(t,y) := \sup_{\varphi^i\in \mathcal{A}} \E\Big[-\exp\Big(-\frac{1}{\delta_i}Y_T^{i,\varphi^i} \Big)\Big| Y_t^{i,\varphi^i}=y \Big],\ t\in [0,T], \, y\in \R.
\end{equation}

The maximum value in \eqref{eq: price impact classic} is thus given by $v(0,y_0^i).$  The Hamilton Jacobi Bellman (HJB) equation for this problem reads
\begin{equation}
    0 = v_t + \max_{a \in \R}\left\{v_y \widetilde{\mu}^{-i} a + \left(\frac{\alpha}{n}v_y + \frac{\sigma^2}{2}v_{yy} \right) a^2  \right\}\label{eq:hjb}
\end{equation}
for $y\in \R$, $t\in [0,T]$, with terminal condition $v(T,y)=-\exp(-\frac{1}{\delta_i}y).$ Note that we omitted the arguments of $v$ to keep notation simple. The maximum in \eqref{eq:hjb} is attained at 
\begin{equation}\label{eq:max_hjb}
    a^{*} = -\frac{n\widetilde{\mu}^{-i}v_y}{n\sigma^2 v_{yy} + 2\alpha v_y}.
\end{equation}
Inserting the maximum point into \eqref{eq:hjb} yields
\begin{equation}\label{eq:hjb_pde}
    0 = v_t - \frac{1}{2} \frac{n (\widetilde{\mu}^{-i})^2 v_y^2}{n\sigma^2 v_{yy} + 2\alpha v_y}.
\end{equation}

We use the ansatz $v(t,y) = -f(t)\exp(-\frac{1}{\delta_i}y)$ for some continuously differentiable function $f:[0,T]\to \R$ satisfying $f(T)=1$. Then \eqref{eq:hjb_pde} simplifies to the ODE
\begin{equation}
    f'(t) + \rho f(t) = 0,\, f(T)=1,
\end{equation}
where $\rho = -\frac{1}{2} \frac{n(\widetilde{\mu}^{-i})^2}{n\sigma^2 - 2 \alpha \delta_i}$. The unique solution to this ODE is given by $f(t) = e^{\rho(T-t)},\, t\in [0,T]$. Finally, 
$v(t,y) = - \exp(\rho(T-t)-\frac{1}{\delta_i}y)$, $t\in [0,T],\, y\in \R,$ solves the HJB equation. Inserting $v$ into \eqref{eq:max_hjb} yields 
\begin{equation}
    \varphi^{i,*} \equiv \frac{n\delta_i\widetilde{\mu}^{-i}}{n\sigma^2 - 2\delta_i \alpha}.
\end{equation}
A standard verification theorem (see for example \cite{bjork2004arbitrage}, pp.280-282, \cite{pham2009continuous}, \cite{fleming2006controlled} for similar versions) concludes our proof. In order to see that the optimal strategy is unique (up to sets of measures zero) note that optimal strategies have to satisfy the Bellman optimality principle (this has to be shown, but is standard, see \cite{pham2009continuous}, Thm. 3.3.1). Since we have already computed the value function, this necessarily implies that the optimal strategy is given by extremal points in the HJB equation (up to sets of measures zero). Since these maximum points are unique, the statement follows. 
\end{proof}

Lemma \ref{lemma:loesung_hilfsproblem} together with \eqref{eq: price impact system of equations} introduces a system of linear equations whose solutions constitute Nash equilibrium strategies. The next theorem displays the unique solution to this system and thus, the unique constant Nash equilibrium. In what follows, let $\hat{\theta} \coloneqq \sum_{j=1}^n \frac{\theta_j}{n+\theta_j}.$

\begin{theorem}\label{thm:solution}
Assume that  $n\sigma^2 - 2\delta_j\alpha > 0$ for all $j=1,\ldots,n.$ If $1 - \hat{\theta} \neq \sum_{j=1}^n \frac{n\alpha \delta_j}{(n+\theta_j)\left(n\sigma^2 - \delta_j\alpha\right)},$ the unique constant Nash equilibrium to  \eqref{eq: price impact nash} is given by 
\begin{align*}
\pi^{i,*} &= \frac{n}{n+\theta_i}\frac{n\delta_i\mu}{n\sigma^2 - \delta_i\alpha} + \left(\frac{\theta_i}{n+\theta_i} + \frac{n\alpha\delta_i}{(n+\theta_i)(n\sigma^2 - \delta_i\alpha)}\right)\cdot \frac{\sum_{j=1}^n \frac{n}{n+\theta_j}\frac{n\delta_j}{\left(n\sigma^2 - \delta_j \alpha\right)}\cdot \mu}{1 - \hat{\theta} - \sum_{j=1}^n \frac{n\alpha \delta_j}{(n+\theta_j)\left(n\sigma^2 - \delta_j\alpha\right)}},
\end{align*}
$i=1,\ldots,n.$  If $1 - \hat{\theta} = \sum_{j=1}^n \frac{n\alpha \delta_j}{(n+\theta_j)\left(n\sigma^2 - \delta_j\alpha\right)}$, there is no constant Nash equilibrium.
\end{theorem}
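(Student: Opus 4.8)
The plan is to convert the system formed by the best responses of Lemma~\ref{lemma:loesung_hilfsproblem} and the fixed-point relation \eqref{eq: price impact system of equations} into a single scalar equation for the aggregate position, and then to read off the equilibrium from its unique solution. As already noted before the statement, constant Nash equilibria correspond exactly to solutions of this fixed-point system, so the whole task reduces to solving it.

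First I would restrict to constant, deterministic candidate strategies $\pi^1,\dots,\pi^n$ and set $P \coloneqq \sum_{j=1}^n \pi^j$. For such strategies $\bar{\pi}^{-i} = \tfrac1n(P-\pi^i)$, so the corrected drift is the constant $\widetilde{\mu}^{-i} = \mu + \tfrac{\alpha(n+\theta_i)}{n^2}(P-\pi^i)$. Substituting the best response from Lemma~\ref{lemma:loesung_hilfsproblem} into \eqref{eq: price impact system of equations} then yields, for each $i$, one linear equation in the two unknowns $\pi^i$ and $P$:
\begin{equation*}
\pi^i = \frac{n\delta_i}{n\sigma^2-2\delta_i\alpha}\Big(\mu + \frac{\alpha(n+\theta_i)}{n^2}(P-\pi^i)\Big) + \frac{\theta_i}{n}(P-\pi^i).
\end{equation*}

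The key step is to solve this for $\pi^i$ as an affine function $\pi^i = a_i\mu + b_i P$ of the aggregate. Collecting the $\pi^i$-terms on the left, their coefficient equals $\tfrac{(n+\theta_i)(n\sigma^2-\delta_i\alpha)}{n(n\sigma^2-2\delta_i\alpha)}$, and it is precisely this cancellation that replaces the best-response denominator $n\sigma^2-2\delta_i\alpha$ by $n\sigma^2-\delta_i\alpha$. Carrying out the division, I expect to obtain
\begin{equation*}
a_i = \frac{n}{n+\theta_i}\frac{n\delta_i}{n\sigma^2-\delta_i\alpha},\qquad b_i = \frac{\theta_i}{n+\theta_i} + \frac{n\alpha\delta_i}{(n+\theta_i)(n\sigma^2-\delta_i\alpha)},
\end{equation*}
matching the two coefficients in the statement. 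This bookkeeping is the main obstacle: it is where the standing assumption $n\sigma^2-2\delta_i\alpha>0$ is used, together with $n\sigma^2-\delta_i\alpha>0$, to guarantee that $a_i,b_i$ are well defined and the division legitimate.

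It then remains to pin down $P$. Summing $\pi^i = a_i\mu + b_i P$ over $i$ gives $P\big(1-\sum_{j}b_j\big) = \mu\sum_{j}a_j$, and one checks that $\sum_j b_j = \hat{\theta} + \sum_{j}\tfrac{n\alpha\delta_j}{(n+\theta_j)(n\sigma^2-\delta_j\alpha)}$, so the bracketed factor equals $1-\hat{\theta}-\sum_j\tfrac{n\alpha\delta_j}{(n+\theta_j)(n\sigma^2-\delta_j\alpha)}$. Under the hypothesis this factor is nonzero, so $P$ is uniquely determined and back-substitution into $\pi^i = a_i\mu+b_iP$ reproduces the claimed formula. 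For the degenerate case I would argue by contradiction: if the factor vanishes the aggregate equation reads $0 = \mu\sum_j a_j$, whereas $\mu>0$ and each $a_j>0$ (since $n\sigma^2-\delta_j\alpha>0$ for either sign of $\alpha$, as it follows from $n\sigma^2-2\delta_j\alpha>0$), so the right-hand side is strictly positive, a contradiction; hence no constant equilibrium exists. Finally, since the per-agent coefficient $\tfrac{(n+\theta_i)(n\sigma^2-\delta_i\alpha)}{n(n\sigma^2-2\delta_i\alpha)}$ is strictly positive, each per-agent equation is always uniquely solvable for $\pi^i$ given $P$, so the only possible obstruction to existence and uniqueness is the single aggregate equation, which confirms the dichotomy asserted in the theorem.
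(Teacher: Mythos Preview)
Your proposal is correct and follows essentially the same route as the paper: write each best-response equation as $\pi^i = a_i\mu + b_iP$ with $P=\sum_j\pi^j$, sum to obtain a single scalar equation for $P$, and back-substitute. Your treatment of the degenerate case is in fact more explicit than the paper's, which simply notes that solving for the aggregate ``is possible if and only if'' the coefficient is nonzero without spelling out why the system is then inconsistent.
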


\begin{proof}
Using Lemma \ref{lemma:loesung_hilfsproblem}, the unique optimal solution to the auxiliary problem \eqref{eq: price impact classic} is given by 
\begin{equation}
    \varphi^{i,*} = \frac{n\delta_i \widetilde{\mu}^{-i}}{n\sigma^2 - 2\delta_i \alpha}.
\end{equation}
Note that this is obviously a constant  strategy. Moreover, we defined $\varphi^{i,*}=\pi^i - \frac{\theta_i}{n}\sum_{j\neq i} \pi^j$ and $\widetilde{\mu}^{-i} = \mu + \frac{n+\theta_i}{n^2}\alpha\sum_{j\neq i}\pi^j.$ Hence, we need to solve the following system of linear equations to determine the unique constant Nash equilibrium
\begin{equation}\label{eq:proof1}
    \pi^i - \frac{\theta_i}{n}\sum_{j\neq i}\pi^j = \frac{n\delta_i}{n\sigma^2 - 2\delta_i \alpha}\mu + \frac{\delta_i \alpha}{n\sigma^2 - 2\delta_i \alpha} \frac{n+\theta_i}{n}\sum_{j\neq i} \pi^j.
\end{equation}
Rearranging \eqref{eq:proof1} and adding $\pi^i$ in the sum yields
\begin{equation}\label{eq:proof2}
    \pi^i = \frac{n}{n+\theta_i}\frac{n\delta_i}{n\sigma^2 - \delta_i \alpha}\mu + \left(\frac{\theta_i}{n+\theta_i} + \frac{n\delta_i \alpha}{(n+\theta_i)(n\sigma^2 - \delta_i \alpha)} \right)\sum_{j=1}^n \pi^j.
\end{equation}
Summing over all $i\in \{1,\ldots,n\}$ on both sides then yields
\begin{equation}
    \sum_{j=1}^n \pi^j = \sum_{j=1}^n \frac{n}{n+\theta_j}\frac{n\delta_j}{n\sigma^2 - \delta_j \alpha} \mu + \bigg(\hat{\theta} + \sum_{j=1}^n \frac{n}{n+\theta_j}\frac{\delta_j \alpha}{n\sigma^2 - \delta_j \alpha} \bigg) \sum_{j=1}^n \pi^j. 
\end{equation}
Solving for $\sum_{j=1}^n \pi^j$ (which is possible if and only if $\sum_{j=1}^n \frac{n\alpha\delta_j}{(n+\theta_j)(n\sigma^2 - \delta_j\alpha)} \neq 1-\hat{\theta}$) yields 
\begin{equation}\label{eq:proof3}
    \sum_{j=1}^n \pi^j = \frac{\sum_{j=1}^n \frac{n}{n+\theta_j}\frac{n\delta_j}{\left(n\sigma^2 - \delta_j \alpha\right)}\cdot \mu}{1 - \hat{\theta} - \sum_{j=1}^n \frac{n\alpha \delta_j}{(n+\theta_j)\left(n\sigma^2 - \delta_j\alpha\right)}}.
\end{equation}
Finally, we can insert \eqref{eq:proof3} into \eqref{eq:proof2} to obtain the claimed representation of $\pi^{i,*}$ which concludes our proof.
\end{proof}

\begin{remark}\label{remark:special_exp}
Theorem \ref{thm:solution} contains the two special cases $\alpha = 0$ (no price impact) and $\theta_i=0$ for all $i=1,\ldots,n$ (no relative concerns in the objective function). For $\alpha=0$ we obtain
\begin{equation}
   \pi^{i,*} = \bigg(\frac{n\delta_i}{n+\theta_i} + \frac{\theta_i}{(1-\hat{\theta})(n+\theta_i)}\sum_{j=1}^n \frac{n\delta_j}{n+\theta_j} \bigg)\cdot \frac{\mu}{\sigma^2} >0
\end{equation}
for $i=1,\ldots,n$ which coincides (as expected) with the Nash equilibrium in \cite{bauerle2021nash} (Remark 4.1).  If $\theta_i = 0$ for all $i=1,\ldots,n,$ we deduce 
\begin{equation}
    \pi^{i,*} = \frac{n\delta_i\mu }{n\sigma^2 - \delta_i \alpha} + \frac{
    \alpha\delta_i}{n\sigma^2 - \delta_i \alpha} \cdot  \frac{n\sum_{j=1}^n \frac{\delta_j}{n\sigma^2 - \delta_j \alpha}}{1-\alpha \sum_{j=1}^n \frac{\delta_j}{n\sigma^2 - \delta_j \alpha}}\cdot \mu,
\end{equation}
$i=1,\ldots,n.$
\end{remark}

\subsection{Influence of the parameter $\alpha$}\label{subsec:alpha}
We consider two different features of our solution that are affected by the choice of the price impact parameter $\alpha$. Throughout this subsection, we assume that $\alpha$ satisfies the conditions of Theorem \ref{thm:solution}, i.e. $\alpha< \frac{n\sigma^2}{2\delta_{\text{max}}} \eqqcolon \alpha_{\text{max}}$, where  $\delta_{\text{max}} \coloneqq \max\{\delta_1,\ldots,\delta_n\}$, and $$\hat s(\alpha)\coloneqq \sum_{j=1}^n \frac{n\alpha\delta_j}{(n+\theta_j)(n\sigma^2 - \delta_j \alpha)}+\hat{\theta}\neq 1.$$ 
Indeed, it is possible to show that there exists a unique $\alpha_0\in (0,\alpha_{\text{max}})$ such that $\hat s(\alpha_0)=1.$ This can be seen as follows: First $\alpha\mapsto\hat s(\alpha)$ is strictly increasing and continuous on $(-\infty,\alpha_{\text{max}}].$ Further, we have $\hat s(0)=\hat \theta <1$ and $\hat s( \alpha_{\text{max}})>1.$ Thus, the intermediate value theorem implies the statement. We have to exclude this $\alpha_0$ from our considerations. The specific value of $\alpha_0$ does not depend on the type of the agent. It is the same for all investors.

First, we consider the impact of the choice of $\alpha$ on the optimal strategy of agent $i$, i.e. the $i$-th entry $\pi^{i,*}$ of the Nash equilibrium. It can be easily shown that $\pi^{i,*}>0$, $i=1,\ldots,n,$ if and only if $\alpha<\alpha_0$. Moreover, we can compute the derivative of $\pi^{i,*}$ with respect to $\alpha$ and deduce that it is strictly positive on $(-\infty,\alpha_{\text{max}})\setminus\{\alpha_0\}.$ Note, however, that $ \pi^{i,*}$ is only piecewise increasing on $(-\infty,\alpha_0)$ and $(\alpha_0, \alpha_{\text{max}})$ due to the discontinuity located at $\alpha_0$.

The second property of $\alpha$ we want to consider is the influence on the equilibrium stock price $(S_t^*)_{t\in [0,T]}$ that is obtained by inserting the Nash equilibrium from Theorem \ref{thm:solution} into the stock price dynamic. At first, it is not clear whether $S_t^*$ is smaller or larger than the stock price with drift $\mu$ and volatility $\sigma$ without the $n$ investors' impact. It obviously suffices to consider the drift of $\mathrm{d}S_t^*/S_t^*$ compared to $\mu$ since the volatility does not depend on the $n$ agents' investments. 

From the proof of Theorem \ref{thm:solution}, we know that the arithmetic mean of the components of the Nash equilibrium is given by
\begin{equation}
    \frac{1}{n}\sum_{j=1}^n \pi^{j,*} = \frac{(\hat s(\alpha)-\hat \theta)\cdot \mu/\alpha}{1 - \hat{s}(\alpha) }.
\end{equation}
Therefore, the drift of $S_t^*$ is equal to
\begin{align*}
    \mu + \frac{\alpha}{n}\sum_{j=1}^n \pi^{j,*} = \mu \cdot \frac{\hat s(\alpha)-\hat \theta}{1 - \hat{s}(\alpha) }.  
\end{align*}
Since the constant $\frac{\hat s(\alpha)-\hat \theta}{1 - \hat{s}(\alpha)}$ is strictly positive if and only if $\alpha \in (0,\alpha_0)$, we deduce that the drift of $S_t^*$ is larger (smaller) than $\mu$ if and only if $\alpha\in (0,\alpha_0)$ ($\alpha\in (-\infty,0)\cup (\alpha_0,\alpha_{\text{max}})$). Moreover, since we already saw that $\sum_{j=1}^n \pi^{j,*}$ is piecewise increasing in terms of $\alpha$, we infer that $S_t^*$ is also piecewise increasing in terms of $\alpha$ on $(-\infty,\alpha_0)$ and $(\alpha_0, \alpha_{\text{max}})$. More specifically, we obtain the following ordering 
    \begin{equation*}
        S_t^*(\alpha_3) < S_t^*(\alpha_1) < S_t^*(0) < S_t^*(\alpha_2)
    \end{equation*}
    for $\alpha_1 < 0,\, \alpha_2 \in (0,\alpha_0),\, \alpha_3 \in (\alpha_0,\amax)$. We refer to \cite{goell2023expected} for a more detailed discussion of the influence on the stock price.

Figure \ref{fig:pi_CARA} shows the behavior of $\pi^{1,*}$ from Theorem \ref{thm:solution} in terms of $\alpha$ for the two different risk aversion parameters $\delta_1=1$ and $\delta_1 = 4$. The vertical lines (dotted) show the discontinuity $\alpha_0$ for the different parameter choices. The gray horizontal line (dashed) marks the value zero while the orange and blue horizontal lines (dashed) display the optimal solution to the classical problem of maximizing expected terminal wealth under CARA utility without price impact and relative concerns given by $\delta_1 \mu \sigma^{-2}$.  There are two ways the agents may try to influence the stock price to their advantage. By buying the stock they may jointly increase the stock value and thus raise their utility or by jointly short-selling the stock and thus decrease its value. Our analysis shows that in case of a smaller price impact ($\alpha< \alpha_0$) the agents go for the first option and in case of a larger price impact ($\alpha> \alpha_0$) they go for the latter option.  Indeed, it turns out that there is a critical value $\alpha_0$ for the price impact where the  Nash equilibrium switches from positive to negative investment amounts. Around that value the agents trade very aggressively and try to outperform the others. Under an increasingly negative price impact, the investors engage less in the financial market which is not very surprising. 
If the price impact factor is further increased beyond $\alpha_0$ then the agents agree on investing less, because then it seems to be difficult to beat the performance of the others. 
Of course, this is only true under the exponential utility where short-selling is no problem. However, we will see later that for CRRA utilities a similar phenomenon occurs.

\begin{figure}[!tb] 
    \begin{center}
    \includegraphics[scale = 1]{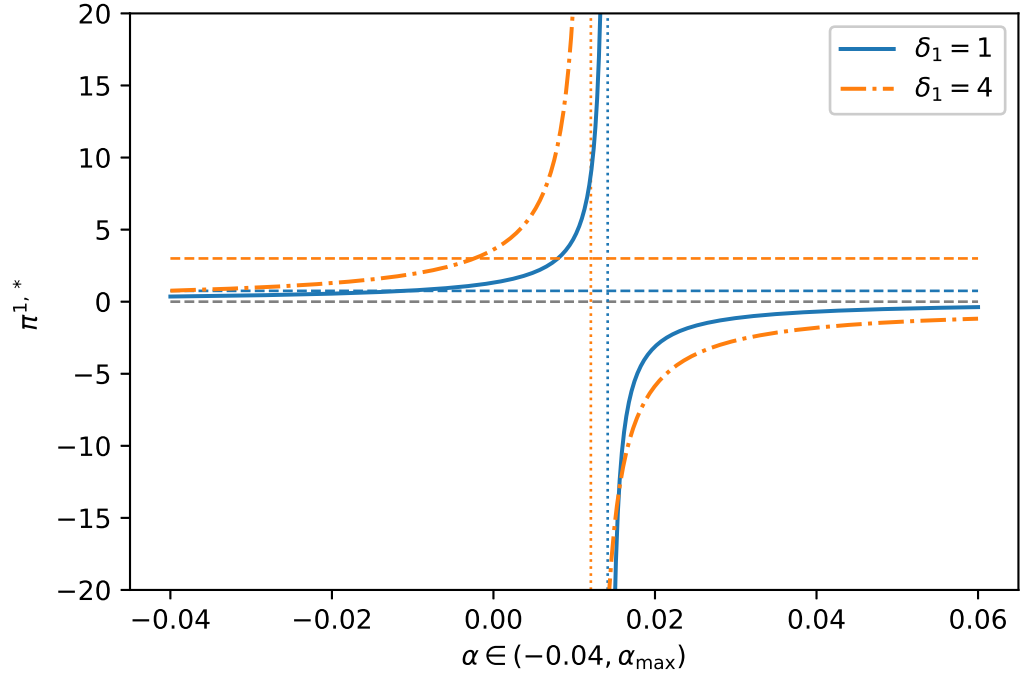}
    \end{center}
    \caption{Illustration of $\pi^{1,*}$ from Theorem \ref{thm:solution} in terms of $\alpha\in (-0.04,\alpha_{\text{max}})$ 
    for $n=12$, $\mu = 0.03$, $\sigma = 0.2$ and $\alpha_{\text{max}} = n \sigma^2 / 8=0.06$. $\theta_1 = 0.3,\, \delta_1\in \{1,4\}$ and the parameters $\theta_j$ and $ \delta_j$, $j\geq 2$ are increasing from $0$ to $1$ with step size $0.1$ and from $0.5$ to $2.7$ by step size $0.2$, respectively. The dashed blue and orange horizontal lines represent the optimal investment without price impact, given by $\delta_1 \mu\sigma^{-2}.$}
    \label{fig:pi_CARA}
\end{figure}

\section{Optimization under CARA utility with nonlinear price impact }\label{sec:nonlinear}
At the beginning of Section \ref{sec:price impact market}, we assumed that the price impact of the $n$ investors in our financial market is given as a linear function in terms of the arithmetic mean of the $n$ investors' strategies. While the use of the arithmetic mean seems intuitive and reasonable since we assumed that investors are 'small', one could ask whether using a different function than a linear one would lead to a different optimization problem and hence also a different Nash equilibrium. 

In Theorem \ref{thm:solution}, we were able to find an explicit solution to the associated multi-objective portfolio optimization problem using exponential utility (if the parameters are chosen accordingly). The proof highly relies on the linearity of the price impact, so we will not be able to give an explicit solution to the resulting optimization problem in general. However, we will discuss that using a function $g$ that grows superlinearly yields a problem that does not have a finite optimal solution while a function $g$ that grows sublinearly yields a finite optimal solution. If $g$ is a linear function, it depends on the parameter choices whether or not there exists a finite optimal solution (cf. Theorem \ref{thm:solution}). Since, in the linear case, the optimally invested amount is close to zero for decreasing price impact (i.e. if $\alpha<0$, see Theorem \ref{thm:solution} and Figure \ref{fig:pi_CARA}) we only consider price impact which is increasing in order size.

More explicitly, the price impact will now be modeled by some {\em strictly increasing and continuous function } $g:\R\to\R$ with $g(0)=0$. Therefore, the stock price process will be given as the solution to the SDE
\begin{equation}
\dx S_t = S_t\left(\left(\mu + g \left(\bar{\pi}_t\right)\right)\dx t + \sigma \dx W_t\right),\, S_0 = 1, 
\end{equation}
which is, of course, still just a stochastic exponential. 

As before, we have to restrict ourselves to constant Nash equilibria. Therefore, from the perspective of investor $i$, we can rewrite the expression $g(\bar{\pi}_t)$ in the previous SDE as follows
\begin{equation}
g(\bar{\pi}_t) = g\bigg(\frac{1}{n}\sum_{j=1}^n \pi^j_t\bigg) = g\bigg(\frac{1}{n}\pi^i_t + \frac{1}{n}\sum_{j\neq i} \pi^j\bigg) \eqqcolon \widetilde{g}(\pi^i_t),
\end{equation}
where $\widetilde{g}(p) \coloneqq g\left(\frac{p}{n}+ \frac{1}{n}\sum_{j\neq i} \pi^j\right),\, p\in \R$. Of course, we assumed that the strategies $\pi^j$, $j\neq i$, of the other investors are fixed, and constant. It also follows that $\widetilde{g}$ is still strictly increasing and satisfies $\widetilde{g}\left(-\sum_{j\neq i} \pi^j\right)=0$. 

Again, strategies $\pi^i$ are restricted to the set $\mathcal{A}$ of admissible strategies.

In the following, we will prove that 
\begin{equation}\label{eq:problem_g}
\begin{cases}
& \sup_{\pi^i \in \mathcal{A}} \E\left[-\exp\left( -\frac{1}{\delta_i}\left(X_T^{i,\pi^i} - \frac{\theta_i}{n}\sum_{j\neq i} X_T^{j,\pi^j}\right)\right) \right],\\
\st & X^{i,\pi^i}_T = x_0^i +  \displaystyle\int_0^T \pi^i_t\left(\left(\mu + g(\bar{\pi}_t)\right) \dx t + \sigma \dx W_t \right),
\end{cases}
\end{equation}
has an optimal solution if $g$ grows sublinearly and there exists no optimal strategy if $g$ grows superlinearly. 
%Here, $\pi^j$, $j\neq i$, are assumed to be constant.

The following theorem summarizes the first assertion of this section, which treats the case that $g$ grows superlinearly. 

\begin{proposition}\label{prop:nonlinear_price_impact}
If $\lim_{x\to \pm \infty} \frac{g(x)}{x} = \infty$, \eqref{eq:problem_g} does not have an optimal solution.
\end{proposition}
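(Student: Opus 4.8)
The plan is to show that the supremum in \eqref{eq:problem_g} equals $0$ but is never attained by any admissible strategy. Since $-\exp\!\big(-\tfrac1{\delta_i}y\big)<0$ for every $y\in\R$, every admissible $\pi^i$ yields an objective value strictly below $0$ (once we know the relative terminal wealth is a.s.\ finite), so the supremum is at most $0$. Hence it suffices to produce a sequence of admissible strategies along which the objective tends to $0$, and then to observe that $0$ cannot be reached.

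The first step is to rewrite the relative wealth exactly as in the linear derivation leading to \eqref{eq: price impact classic}. Fixing the constant strategies $\pi^j$, $j\neq i$, and setting $P:=\sum_{j\neq i}\pi^j$ and $\varphi^i_t:=\pi^i_t-\tfrac{\theta_i}{n}P$, one obtains
\[
Y_T^{i} = y_0^i + \int_0^T \varphi^i_t\Big(\big(\mu + g(\bar\pi_t)\big)\,\dx t + \sigma\,\dx W_t\Big),
\qquad \bar\pi_t=\tfrac1n\pi^i_t+\tfrac1n P,
\]
with $y_0^i=x_0^i-\tfrac{\theta_i}{n}\sum_{j\neq i}x_0^j$, where $Y_T^i$ abbreviates the relative wealth inside the utility. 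I then restrict to the constant controls $\pi^i_t\equiv p$. For these, both $\bar\pi\equiv\tfrac{p+P}{n}$ and $\varphi^i\equiv p-\tfrac{\theta_i}{n}P$ are constant, so $Y_T^{i}$ is Gaussian with mean $m(p)=y_0^i+\big(p-\tfrac{\theta_i}{n}P\big)\big(\mu+g(\tfrac{p+P}{n})\big)T$ and variance $s(p)^2=\big(p-\tfrac{\theta_i}{n}P\big)^2\sigma^2 T$. Using the Gaussian moment generating function,
\[
\E\Big[-\exp\big(-\tfrac{1}{\delta_i}Y_T^{i}\big)\Big]
= -\exp\Big(-\tfrac{m(p)}{\delta_i} + \tfrac{s(p)^2}{2\delta_i^2}\Big).
\]

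Next I let $p\to+\infty$ and analyse the exponent $-\tfrac{m(p)}{\delta_i}+\tfrac{s(p)^2}{2\delta_i^2}$. Writing $q:=p-\tfrac{\theta_i}{n}P$, which is positive and of order $p$ for large $p$, the exponent equals $q\big[-\tfrac{T}{\delta_i}g(\tfrac{p+P}{n})+\tfrac{q\sigma^2 T}{2\delta_i^2}\big]+O(p)$. The hypothesis $\lim_{x\to+\infty} g(x)/x=\infty$ gives $g(\tfrac{p+P}{n})\gg\tfrac{p+P}{n}$, so the bracket is eventually negative and tends to $-\infty$, while $q\to+\infty$; hence the whole exponent tends to $-\infty$ and the objective tends to $0^-$. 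Thus the superlinear drift penalty (super-quadratic in $p$) overwhelms the variance reward (only quadratic in $p$), and combined with the upper bound $0$ this shows the supremum of \eqref{eq:problem_g} is exactly $0$. (Only the $x\to+\infty$ branch of the hypothesis is needed; the $x\to-\infty$ branch would give an analogous sequence with $p\to-\infty$.)

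Finally, for non-attainment I argue that any admissible $\pi^i$ for which the wealth dynamics in \eqref{eq:problem_g} are well defined has $Y_T^i$ finite $\PP$-a.s., whence $-\exp\!\big(-\tfrac1{\delta_i}Y_T^i\big)<0$ a.s.\ and the objective is strictly negative; therefore no strategy attains the value $0$, and \eqref{eq:problem_g} admits no optimizer. I expect the asymptotic domination in the previous step to be the computational heart of the argument, but the genuinely delicate point is this last one: one must ensure that superlinear $g$ does not admit a degenerate admissible strategy driving $Y_T^i=+\infty$ on a set of positive probability (which would spuriously attain $0$). This is precisely where the well-posedness of the wealth integral -- i.e.\ $\int_0^T|\varphi^i_t\,g(\bar\pi_t)|\,\dx t<\infty$ $\PP$-a.s.\ -- enters, and it is the step I would treat most carefully.
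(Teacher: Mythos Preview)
Your proof is correct and follows essentially the same route as the paper's: restrict to constant strategies, compute the objective via the Gaussian moment generating function, and show that the certainty-equivalent term (the paper's $\mu(\pi^i)-\tfrac{\sigma(\pi^i)^2}{2\delta_i}$, which is just a reparametrisation of your exponent) diverges as the control is sent to infinity, forcing the value to be $0$ and hence unattained. Your flagging of the well-posedness issue for the non-attainment step is a point the paper does not address explicitly; it simply takes for granted that admissible strategies produce a finite $Y_T^i$.
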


\begin{proof}
In order to prove that \eqref{eq:problem_g} does not have an optimal solution, we will prove that, even if we only consider constant strategies for agent $i$, the optimal value is zero and the associated strategy is infinite. If $\pi^j$ is constant for all $j\in \{1,\ldots,n\}$, we obtain
\begin{align*}
    &X_T^{i,\pi^i} - \frac{\theta_i}{n} \sum_{j\neq i} X_T^{j,\pi^j}\\
    =& x_0^i - \frac{\theta_i}{n}\sum_{j\neq i} x_0^j + \Big(\pi^i - \frac{\theta_i}{n}\sum_{j\neq i} \pi^j \Big)(\mu + g(\bar{\pi})) T + \Big(\pi^i - \frac{\theta_i}{n}\sum_{j\neq i} \pi^j\Big) \sigma W_T\\
    \eqqcolon & y_0^i + \mu(\pi^i)T + \sigma(\pi^i)W_T.
\end{align*}
Hence, for fixed $\pi^j$, $j=1,\ldots,n$, the value of the objective function in \eqref{eq:problem_g} is given by 
\begin{align}
    &\E\Big[-\exp\Big(-\frac{1}{\delta_i}\Big(y_0^i + \mu(\pi^i)T + \sigma(\pi^i)W_T\Big) \Big) \Big]\\
    =& -\exp\Big(-\frac{1}{\delta_i}y_0^i\Big) \cdot \exp\Big(-\frac{1}{\delta_i}\Big(\mu(\pi^i) - \frac{\sigma(\pi^i)^2}{2\delta_i} \Big)T \Big).
\end{align}
Thus, maximizing the objective function of $\eqref{eq:problem_g}$ with respect to constant strategies $\pi^i$ is equi\-valent to maximizing $ \mu(\pi^i) - \frac{\sigma(\pi^i)^2}{2\delta_i}.$
Reinserting the definition of $\mu(\pi^i)$ and $\sigma(\pi^i)$ yields 
\begin{align*}
    &\mu(\pi^i) - \frac{\sigma(\pi^i)^2}{2\delta_i} \\
=& \pi^i g(\bar{\pi}) - \frac{\sigma^2}{2\delta_i}(\pi^i)^2 + \pi^i\Big(\mu + \frac{\sigma^2 \theta_i}{n\delta_i}\sum_{j\neq i}\pi^j \Big)-\frac{\theta_i}{n}g(\bar{\pi})\sum_{j\neq i} \pi^j - \frac{\theta_i}{n}\sum_{j\neq i}\pi^j \Big(\mu + \frac{\sigma^2 \theta_i}{2n\delta_i}\sum_{j\neq i} \pi^j \Big)
\end{align*}
which converges to $\infty$ if $\pi^i$ converges to $\pm \infty$ using the assumption that $g$ grows superlinearly. Therefore, 
\begin{align*}
0 \geq &\sup_{\pi^i\in \mathcal{A}}\E\Big[-\exp\Big(-\frac{1}{\delta_i}\Big(X_T^{i,\pi^i} - \frac{\theta_i}{n} \sum_{j\neq i} X_T^{j,\pi^j} \Big) \Big) \Big]\\
\geq & \sup_{\substack{\pi^i\in \mathcal{A}\\ \pi^i \text{ constant}}}\E\Big[-\exp\Big(-\frac{1}{\delta_i}\Big(X_T^{i,\pi^i} - \frac{\theta_i}{n} \sum_{j\neq i} X_T^{j,\pi^j} \Big) \Big) \Big] = 0. 
\end{align*}
Hence, the optimal value of \eqref{eq:problem_g} is zero, which implies that the argument inside the exponential function needs to be infinite. Hence, the problem does not have an optimal solution.
\end{proof}

As a result, we cannot hope for a Nash equilibrium in this case.
Now we can consider the case of sublinear growth of $g$. Hence, we assume that 
$$\lim_{x\to \pm\infty} \frac{g(x)}{x} = 0.$$ 
Then we can prove that there  exists an optimal strategy for \eqref{eq:problem_g}. In order to do so, let $a^*$ be a maximum point of
\begin{equation}\label{eq:optprobsublinear}
     a \mapsto \Big(a- \frac{\theta_i}{n}\sum_{j\neq i} \pi^j \Big)\Big(\mu+\widetilde{g}(a)\Big) -\frac{\sigma^2}{2\delta_i} \Big(a- \frac{\theta_i}{n}\sum_{j\neq i} \pi^j \Big)^2.
\end{equation}
Due to our assumption on $g$, a maximum point $a^*$ exists and is finite. Then we obtain the following result.

\begin{proposition}\label{prop:sublinear}
If $\lim_{x\to \pm \infty} \frac{g(x)}{x} = 0$, an optimal strategy for \eqref{eq:problem_g} is given by $\pi_t^i\equiv a^*$, where $a^*$ is the maximum point from \eqref{eq:optprobsublinear}.
\end{proposition}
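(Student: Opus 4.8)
The plan is to mirror the HJB/verification approach of Lemma~\ref{lemma:loesung_hilfsproblem}, now applied to the nonlinear best-response problem \eqref{eq:problem_g}. Fix investor $i$ and recall that the strategies $\pi^j$, $j\neq i$, are constant and deterministic, so that $g(\bar\pi_t)=\widetilde g(\pi^i_t)$. First I would introduce the relative wealth $Y_t^{i,\pi^i}\coloneqq X_t^{i,\pi^i}-\frac{\theta_i}{n}\sum_{j\neq i}X_t^{j,\pi^j}$ and compute, as in the linear case, that under a control $\pi^i$ its dynamics read
\begin{equation*}
\dx Y_t^{i,\pi^i}=\Big(\pi^i_t-\tfrac{\theta_i}{n}\sum_{j\neq i}\pi^j\Big)\Big(\big(\mu+\widetilde g(\pi^i_t)\big)\dx t+\sigma\dx W_t\Big).
\end{equation*}
Crucially, both the drift and the diffusion coefficient depend on the control $\pi^i_t$ only and not on $Y_t$ itself, so this is a genuine one-dimensional stochastic control problem whose value function $v(t,y)=\sup_{\pi^i\in\mathcal{A}}\E[-\exp(-\frac{1}{\delta_i}Y_T^{i,\pi^i})\mid Y_t^{i,\pi^i}=y]$ is amenable to the same treatment as in Lemma~\ref{lemma:loesung_hilfsproblem}. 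Note that, unlike the computation in Proposition~\ref{prop:nonlinear_price_impact}, the point here is optimality over \emph{all} of $\mathcal{A}$, not merely over constant strategies, which is precisely what the HJB/verification machinery supplies.

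Next I would write down the HJB equation
\begin{equation*}
0=v_t+\sup_{a\in\R}\Big\{v_y\big(a-\tfrac{\theta_i}{n}\sum_{j\neq i}\pi^j\big)\big(\mu+\widetilde g(a)\big)+\tfrac{\sigma^2}{2}v_{yy}\big(a-\tfrac{\theta_i}{n}\sum_{j\neq i}\pi^j\big)^2\Big\},
\end{equation*}
with terminal condition $v(T,y)=-\exp(-\frac{1}{\delta_i}y)$, and insert the ansatz $v(t,y)=-f(t)\exp(-\frac{1}{\delta_i}y)$ with $f>0$, $f(T)=1$. Factoring out $v_y=\frac{1}{\delta_i}f\exp(-\frac{1}{\delta_i}y)>0$ turns the bracketed supremum into exactly the maximization \eqref{eq:optprobsublinear}; by the sublinear growth of $g$, as already noted in the text, this map tends to $-\infty$ at $\pm\infty$ and, being continuous, attains its maximum at the finite point $a^*$. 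Denoting the resulting constant maximal value by $K$, the HJB collapses to the linear ODE $f'(t)=\frac{K}{\delta_i}f(t)$, $f(T)=1$, with positive solution $f(t)=\exp(-\frac{K}{\delta_i}(T-t))$. This produces both the candidate value function and the candidate optimal control $\pi^i_t\equiv a^*$, which is constant as claimed.

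Finally comes the verification step, which I expect to be the main obstacle. Applying It\^o's formula to $v(t,Y_t^{i,\pi^i})$ for an arbitrary admissible $\pi^i\in\mathcal{A}$, the HJB inequality makes the finite-variation part nonpositive, so $v(t,Y_t^{i,\pi^i})$ is a local supermartingale, while for the constant control $a^*$ the drift vanishes and it is a local martingale; for $a^*$ the state $Y_T^{i,a^*}$ is moreover Gaussian, so the value it achieves equals $v(0,y_0^i)$ explicitly. The delicate point is upgrading these local statements to genuine (super)martingale statements — i.e.\ controlling $\E\int_0^T e^{-2Y_t^{i,\pi^i}/\delta_i}(\pi^i_t-\frac{\theta_i}{n}\sum_{j\neq i}\pi^j)^2\,\dx t$ along a localizing sequence — so that $\E[-\exp(-\frac{1}{\delta_i}Y_T^{i,\pi^i})]\le v(0,y_0^i)$ holds for every $\pi^i\in\mathcal{A}$, with equality for $a^*$. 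This is the familiar integrability subtlety of CARA verification theorems; I would resolve it by a localization argument exploiting the sublinear growth of $\widetilde g$ and the square-integrability built into $\mathcal{A}$, or, exactly as in Lemma~\ref{lemma:loesung_hilfsproblem}, by invoking a standard verification theorem once the candidate value function is in hand.
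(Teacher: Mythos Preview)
Your HJB/verification approach is sound and, as you note, mirrors Lemma~\ref{lemma:loesung_hilfsproblem}; the ansatz reduces the Hamiltonian exactly to \eqref{eq:optprobsublinear}, and sublinearity of $g$ guarantees a finite maximizer $a^*$, after which the ODE for $f$ and the candidate value function follow as you describe. The paper, however, takes a genuinely different route: it first restricts to \emph{bounded} strategies $|\pi^i_t|\le K$, then performs a Girsanov change of measure using the density $\exp\big(-\tfrac{\sigma}{\delta_i}\int_0^T(\pi^i_t-\tfrac{\theta_i}{n}\sum_{j\neq i}\pi^j)\,\dx W_t-\tfrac{\sigma^2}{2\delta_i^2}\int_0^T(\cdot)^2\,\dx t\big)$, which is a bona fide density precisely because of the boundedness. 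Under the new measure the (negative) objective becomes the $\mathbb{Q}$-expectation of a \emph{pathwise} functional $\exp\big(-\tfrac{1}{\delta_i}\int_0^T[\,(\pi^i_t-c)(\mu+\widetilde g(\pi^i_t))-\tfrac{\sigma^2}{2\delta_i}(\pi^i_t-c)^2\,]\,\dx t\big)$, so optimality reduces to the pointwise maximization \eqref{eq:optprobsublinear} with no stochastic analysis left; since $a^*$ is interior, the bound $K$ can be taken large enough and then dropped. The trade-off is clear: your approach is systematic and reuses the machinery already in place, but leaves you with exactly the CARA verification/localization issue you flag; the paper's measure-change argument is tailored to the exponential utility and converts the supermartingale comparison into a deterministic, $\omega$-by-$\omega$ inequality, thereby sidestepping that issue entirely.
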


\begin{proof}
For  the moment, we restrict to bounded strategies $(\pi_t^i),$ i.e.\ there exists a constant $K>0$ such that $|\pi_t^i|\le K$ for all $t\in[0,T].$
 For constants $\pi^j$, we obtain
\begin{align*}
    &-\frac{1}{\delta_i}\Big(X_T^{i,\pi^i} - \frac{\theta_i}{n} \sum_{j\neq i} X_T^{j,\pi^j}\Big)\\
     =& -\frac{1}{\delta_i}\Big(x_0^i - \frac{\theta_i}{n}\sum_{j\neq i} x_0^j\Big) -\frac{1}{\delta_i}\Bigg( \int_0^T \Big(\pi^i_t - \frac{\theta_i}{n}\sum_{j\neq i} \pi^j \Big)(\mu + g(\bar{\pi}_t)) \dx t + \sigma \int_0^T \Big(\pi^i_t - \frac{\theta_i}{n}\sum_{j\neq i} \pi^j\Big) \dx W_t\Bigg)\\
     & - \frac{\sigma^2}{2\delta_i^2} \int_0^T \Big(\pi^i_t - \frac{\theta_i}{n}\sum_{j\neq i} \pi^j\Big)^2 \dx t+ \frac{\sigma^2}{2\delta_i^2} \int_0^T \Big(\pi^i_t - \frac{\theta_i}{n}\sum_{j\neq i} \pi^j\Big)^2 \dx t.
\end{align*}
Now define a new probability measure $\mathbb{Q}$ by
$$ \frac{\dx \mathbb{Q} }{\dx \mathbb{P}} = \exp\Bigg(  - \frac{\sigma^2}{2\delta_i^2} \int_0^T \Big(\pi^i_t - \frac{\theta_i}{n}\sum_{j\neq i} \pi^j\Big)^2 \dx t -  \frac{\sigma}{\delta_i} \int_0^T \Big(\pi^i_t - \frac{\theta_i}{n}\sum_{j\neq i} \pi^j\Big) \dx W_t\Bigg)
.$$
Note that this expression is a density since $\pi_t^i$ is bounded. Thus, we can write the (negative) target function of \eqref{eq:problem_g} as
\begin{align*}
    & \E\Bigg[\exp\Bigg( -\frac{1}{\delta_i}\Bigg(X_T^{i,\pi^i} - \frac{\theta_i}{n}\sum_{j\neq i} X_T^{j,\pi^j}\Bigg)\Bigg) \Bigg]
     = \exp\Bigg(-\frac{1}{\delta_i}\Big(x_0^i - \frac{\theta_i}{n}\sum_{j\neq i} x_0^j\Big)\Bigg) \E_{\mathbb{Q}}[Y^{\pi^i}],%\\
%     &\cdot \E_{\mathbb{Q}}\left[ \exp\Bigg( -\frac{1}{\delta_i}\Bigg( \int_0^T \Big(\pi^i_t - \frac{\theta_i}{n}\sum_{j\neq i} \pi^j \Big)(\mu + g(\bar{\pi}_t)) - \frac{\sigma^2}{2\delta_i}  \Big(\pi^i_t - \frac{\theta_i}{n}\sum_{j\neq i} \pi^j\Big)^2 \dx t\Bigg)\Bigg)\right].
\end{align*}
where
$$ Y^{\pi^i}:= \exp\Bigg( -\frac{1}{\delta_i}\Bigg( \int_0^T \Big(\pi^i_t - \frac{\theta_i}{n}\sum_{j\neq i} \pi^j \Big)(\mu + g(\bar{\pi}_t)) - \frac{\sigma^2}{2\delta_i}  \Big(\pi^i_t - \frac{\theta_i}{n}\sum_{j\neq i} \pi^j\Big)^2 \dx t\Bigg)\Bigg).$$

But now in order to minimize the expectation we can do this pointwise under the integral which leads to maximizing \eqref{eq:optprobsublinear}.  More precisely, note that $Y^{\pi^i}\le Y^{a^*}$ for all admissible $\pi^i$ and that $Y^{a^*}$ is deterministic. Hence, we obtain $\E_{\mathbb{Q}}[Y^{\pi^i}] \le Y^{a^*} = \E_{\mathbb{Q}^*}[Y^{a^*}]$. Since the maximizing point is not at the boundary, the assumption of bounded policies is no restriction. Thus, we have solved the problem.
\end{proof}

Whether or not a Nash equilibrium exists in this case depends on the precise choice of $g.$ Below, we provide an example of a function $g$ and parameter choices for which it is possible to determine a Nash equilibrium numerically.

\begin{remark}
    The structure of the function \eqref{eq:optprobsublinear} considered in the proof of Proposition \ref{prop:sublinear} implies that there exist at least one and at most two global maxima (see Remark 7.10 in \cite{goell2023expected} for a more detailed discussion). 
\end{remark}

\begin{example}
Let us provide a short numerical example in which there exists a unique constant Nash equilibrium under sublinear price impact. We consider two investors ($n=2$) and choose 
\begin{equation*}
    g(x) = \begin{cases}
        -\alpha (-x)^\gamma,\, & x<0,\\
        \alpha x^\gamma,\, & x\geq 0,
    \end{cases}
\end{equation*}
for some $\alpha>0$ and $\gamma \in (0,1]$. Note that $g$ satisfies the assumptions posed in the beginning of this section and grows sublinearly if $\gamma<1$. We included the case $\gamma=1$ for comparison to the linear case. \\
Let $\mu = 0.03,\, \sigma = 0.2,\, \delta_1 = 1,\, \delta_2=2,\, \theta_1 = 0.5,\, \theta_2 = 0.7$, and $\alpha = 0.01$. For the specific choice of parameters, we can determine the unique constant Nash equilibrium numerically by maximizing the function from \eqref{eq:optprobsublinear} for $i=1,2$ and solving the fixed point problem afterwards. The results are summarized in Figure \ref{fig:sublinear}. We included the Nash equilibrium in the case of linear price impact ($\gamma = 1$) for comparison (dashed horizontal lines). \\
Figure \ref{fig:sublinear} displays the behavior of the components $\pi^{i,*},\, i=1,2,$ of the constant Nash equilibrium in terms of the exponent $\gamma\in (0,1]$ of the price impact function $g$. The strategies $\pi^{i,*},\, i=1,2,$ in the Nash equilibrium are monotonically increasing in $\gamma$ and bounded from above by the strategies under linear price impact obtained in Theorem \ref{thm:solution}. Thus, the closer $g$ is to a linear function, the greater the resulting investment into the stock. The parameter choices for the two investors imply that agent 1 is more risk averse than agent 2 (note that a large choice of $\delta_i$ and $\theta_i$ can be associated to a more risk seeking investor, see, for example, Remark 3.5 in \cite{bauerle2021nash}). Thus, it does not come as a surprise that $\pi^{1,*}<\pi^{2,*}$ for all $\gamma \in (0,1]$.\\
It should be noted that some parameter choices do not yield a Nash equilibrium. A similar observation was made in the linear case (see Theorem \ref{thm:solution}). 
%However, in the nonlinear case we are not able to specify the admissible parameter choices. 

\begin{figure}
    \centering
    \includegraphics[scale = 1]{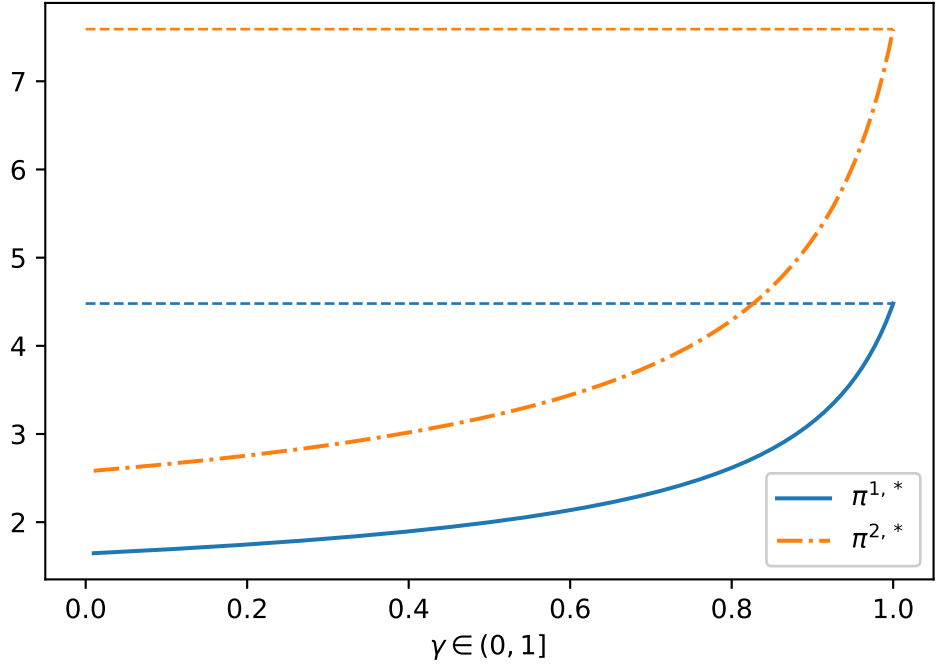}
    \caption{Illustration of the constant Nash equilibrium $(\pi^{1,*},\pi^{2,*})$ in terms of $\gamma\in (0,1]$ for the parameter choices $\mu = 0.03,\, \sigma = 0.2,\, \delta_1 = 1,\, \delta_2=2,\, \theta_1 = 0.5,\, \theta_2 = 0.7$, and $\alpha = 0.01$. The horizontal dashed lines represent the Nash equilibrium under linear price impact for comparison.}
    \label{fig:sublinear}
\end{figure}

\end{example}

\section{Optimization under CRRA utility with linear price impact}\label{sec:power}

In this section, we assume that agents use CRRA utility functions (power or logarithmic) to measure their preferences. Hence, we let 
\begin{equation}
U_i:(0,\infty) \rightarrow \R, \, x\mapsto \begin{cases}
\left(1-\frac{1}{\delta_i}\right)^{-1} x^{1-\frac{1}{\delta_i}},\, & \delta_i \neq 1,\\
\ln(x),\, & \delta_i = 1
\end{cases} 
\end{equation}
for some preference parameter $\delta_i > 0$, $i=1,\ldots,n$. By $\ln(\cdot)$ we denote the natural logarithm.\medskip

While using CRRA utility functions, it is mathematically more convenient to optimize the invested fraction of wealth instead of the amount or number of shares. Thus, throughout this subsection $\pi^i_t$, $i=1,\ldots,n,$ denotes the fraction of agent $i$'s wealth invested into the risky stock at some time $t\in [0,T]$.   However, we use the same SDE \eqref{eq:price_impact_dynamic} for the stock price as before. Thus, the interpretation of $\alpha$ in this model is different. The wealth process of agent $i$ is therefore given as the solution to the SDE 
\begin{equation}
\dx X_t^{i,\pi^i} = X_t^{i,\pi^i} \pi^i_t\left(\left(\mu + \alpha \bar{\pi}_t\right)\dx t +  \sigma \dx W_t\right),\, X_0^{i,\pi^i} = x_0^i.\label{eq:dynamic_X_price_impact_fraction}
\end{equation}

Similar to Section 3 in \cite{lacker_zariphopoulou_n-agent_nash}, we include the strategic interaction component into our problem by inserting the product of agent $i$'s and a weighted geometric mean of the other agents' terminal wealth into the expected utility criterion of the portfolio optimization problem. Therefore, the portfolio optimization problem of agent $i$ is given by 
\begin{equation}\label{eq:problem price impact power}
\begin{cases}
& \sup_{\pi^i\in \mathcal{A}} \, \E\Bigg[\frac{\delta_i}{\delta_i - 1} \Bigg(X_T^{i,\pi^i} \Big(\prod_{j\neq i} X_T^{j,\pi^j}\Big)^{-\frac{\theta_i}{n}}\Bigg)^{\frac{\delta_i-1}{\delta_i}} \Bigg],\\
\text{s.t. } & \dx X_t^{i,\pi^i} = X_t^{i,\pi^i} \pi^i_t\left(\left(\mu + \alpha \bar{\pi}_t\right)\dx t +  \sigma \dx W_t\right),\, X_0^{i,\pi^i} = x_0^i.
\end{cases}
\end{equation}

In order to find an explicit solution for the Nash equilibrium, we need to restrict ourselves to constant strategies. Since the reduction to some auxiliary problem containing only one instead of all $n$ agents is not possible in this setting, we need to directly solve the best response problem in order to determine the Nash equilibrium. Then the unique constant Nash equilibrium is given in the following theorem.

\begin{theorem}\label{thm:price impact power constant}
Assume that the following assumptions hold
\begin{enumerate}
\item $(n+\theta_i) \left(n\sigma^2 - \delta_i \alpha\right) - n\theta_i \delta_i \sigma^2\neq 0$ for all $i=1,\ldots,n$,
\item $n\sigma^2 - 2\delta_i\alpha>0$ for all $i=1,\ldots,n$,
\item $1 - \sum_{j=1}^n \frac{(n-\theta_j)\alpha\delta_j-n\theta_j(\delta_j-1)\sigma^2}{(n+\theta_j)(n\sigma^2-\alpha\delta_j) - n\theta_j\delta_j\sigma^2}\neq 0$.
\end{enumerate}
Then the unique (up to modifications) constant Nash equilibrium to \eqref{eq:problem price impact power} in terms of invested fractions is given by 
\begin{align}
    \pi^{i,*} = &\frac{n^2\delta_i \mu}{(n+\theta_i)(n\sigma^2 - \delta_i \alpha) - n\theta_i \delta_i \sigma^2} + \frac{(n-\theta_i)\alpha\delta_i - n\theta_i(\delta_i-1)\sigma^2}{(n+\theta_i)(n\sigma^2 - \delta_i \alpha)-n\theta_i\delta_i\sigma^2}\\
    &\cdot \Bigg(1 - \sum_{j=1}^n \frac{(n-\theta_j)\alpha\delta_j-n\theta_j(\delta_j-1)\sigma^2}{(n+\theta_j)(n\sigma^2-\alpha\delta_j) - n\theta_j\delta_j\sigma^2} \Bigg)^{-1} \sum_{j=1}^n \frac{n^2\delta_j\mu}{(n+\theta_j)(n\sigma^2 - \delta_j \alpha)-n\theta_j \delta_j\sigma^2}.
\end{align}
\end{theorem}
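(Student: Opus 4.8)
The plan is to solve the best response problem directly, within the class of constant strategies, exploiting that under constant controls every wealth process is a geometric Brownian motion. Fix an agent $i$ and assume the competitors use deterministic constants $\pi^j$, $j\neq i$; restrict agent $i$ to a constant control $\pi^i_t\equiv a$. The essential difference from the CARA setting of Theorem~\ref{thm:solution} is that here no reduction to a single-agent auxiliary market is available: since the drift of \emph{every} agent contains the common term $\alpha\bar{\pi}$ and $\bar{\pi}=\tfrac1n(a+\sum_{j\neq i}\pi^j)$ depends on $a$, a deviation of agent $i$ simultaneously perturbs the terminal wealth $X_T^{j,\pi^j}$ of all competitors through the geometric mean, and this coupling has to be tracked explicitly rather than absorbed into a corrected market.

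With all controls constant, the common drift $\mu+\alpha\bar{\pi}$ is a constant, so each $X_T^{j,\pi^j}$ is lognormal, and I can write the relative-performance argument $Z:=X_T^{i,\pi^i}\big(\prod_{j\neq i}X_T^{j,\pi^j}\big)^{-\theta_i/n}$ in the form $\ln Z = c_i + m_i(a)\,T + \varphi^i\sigma W_T$, where $\varphi^i=a-\tfrac{\theta_i}{n}\sum_{j\neq i}\pi^j$ is the same combination as in~\eqref{eq: def strategy Y_i}, $c_i$ collects the initial-wealth constants, and the deterministic drift $m_i(a)$ is quadratic in $a$ with leading coefficient $\tfrac{\alpha}{n}-\tfrac{\sigma^2}{2}$. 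The $\bar{\pi}$-dependence of the competitors' log-drifts, weighted by $-\theta_i/n$, contributes the extra cross term that ultimately produces the factor $K_i:=(n-\theta_i)\alpha\delta_i-n\theta_i(\delta_i-1)\sigma^2$. Since $W_T\sim N(0,T)$, the Gaussian moment generating function yields the closed form $\E\big[Z^{(\delta_i-1)/\delta_i}\big]=\exp\big(\lambda c_i+\lambda m_i(a)T+\tfrac12\lambda^2(\varphi^i)^2\sigma^2T\big)$ with $\lambda=\tfrac{\delta_i-1}{\delta_i}$, so the objective becomes $\tfrac{\delta_i}{\delta_i-1}\exp(E_i(a))$ with $E_i$ an explicit quadratic in $a$.

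The best response is then found by optimizing this scalar quadratic. The leading coefficient of $E_i$ equals $\tfrac{\lambda T}{2}\big(\tfrac{2\alpha}{n}-\tfrac{\sigma^2}{\delta_i}\big)=-\tfrac{\lambda T}{2n\delta_i}\big(n\sigma^2-2\alpha\delta_i\big)$; combined with the sign of the prefactor $\tfrac{\delta_i}{\delta_i-1}$, assumption~(b) ($n\sigma^2-2\delta_i\alpha>0$) guarantees that $a\mapsto \tfrac{\delta_i}{\delta_i-1}\exp(E_i(a))$ has a unique interior maximizer in both regimes $\delta_i>1$ (where $E_i$ is strictly concave) and $\delta_i<1$ (where $E_i$ is strictly convex but the negative prefactor sends the objective to $-\infty$), with the log case $\delta_i=1$, $\lambda=0$, reducing to maximizing the strictly concave $m_i$ and giving the same formula in the limit. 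The maximizer is characterized by the linear first-order condition $m_i'(a)+\lambda\sigma^2\varphi^i=0$, whose solution is the best response $\pi^{i,*}=\big(n^2\delta_i\mu+K_i\sum_{j\neq i}\pi^j\big)/\big[(n+\theta_i)(n\sigma^2-\delta_i\alpha)-n\theta_i\delta_i\sigma^2\big]$; assumption~(a) ensures the denominator is nonzero. Rewriting $\sum_{j\neq i}\pi^j=\sum_j\pi^j-\pi^{i,*}$ expresses $\pi^{i,*}$ affinely in the grand total $\sum_j\pi^j$.

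Finally I close the system exactly as in Theorem~\ref{thm:solution}: summing the $n$ best-response identities over $i$ gives a single scalar equation for $\sum_j\pi^j$, uniquely solvable precisely when $1-\sum_{j}K_j/\big[(n+\theta_j)(n\sigma^2-\delta_j\alpha)-n\theta_j\delta_j\sigma^2\big]\neq 0$, which is assumption~(c); substituting the resulting value back into the affine best-response relation produces the stated expression for $\pi^{i,*}$, and uniqueness of the constant Nash equilibrium follows from uniqueness of each best-response maximizer together with unique solvability of the linear fixed-point system. I expect the main obstacle to be not any single step but the bookkeeping of the coupling: one must retain the $\bar{\pi}$-dependence of the competitors' log-wealth, which enters $m_i$ through the weight $-\theta_i/n$ and is precisely what distinguishes $K_i$ from its CARA analogue, and one must verify the sign of the quadratic's leading coefficient uniformly across $\delta_i\gtrless 1$ so that the critical point is genuinely the optimizer.
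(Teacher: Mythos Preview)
Your overall architecture---solve the best-response problem with the competitors frozen at constants, then close a linear fixed-point system in $\sum_j\pi^j$---matches the paper exactly, and your idea of exploiting the explicit lognormality of $Z$ instead of setting up an HJB equation is a legitimate and more elementary alternative. There is, however, a genuine gap.

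\textbf{The gap.} In Definition~\ref{definition nash} a Nash equilibrium requires that no deviation $\pi^i\in\mathcal{A}$ can improve agent~$i$'s objective; the qualifier ``constant Nash equilibrium'' refers to the equilibrium profile, not to the admissible deviations. You, however, \emph{restrict agent $i$ to constant controls from the outset} and only optimize the scalar $a\mapsto\tfrac{\delta_i}{\delta_i-1}\exp(E_i(a))$. This shows that no \emph{constant} deviation is profitable, not that no admissible deviation is. The paper closes this gap by solving the full best-response problem over $\mathcal{A}$ via the HJB equation and a verification argument, obtaining the constant maximizer $\pi^{i,*}$ in \eqref{eq:sol_br_power} as the unique optimizer among all admissible strategies. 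Your computation, as it stands, does not establish this.

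The fix is straightforward and keeps your elementary flavour: for general $\pi^i\in\mathcal{A}$ (bounded, say) the same bookkeeping yields $\ln Z_T=c_i+\int_0^T m_i(\pi^i_t)\,\dx t+\sigma\int_0^T\varphi^i_t\,\dx W_t$ with $m_i$ the quadratic you already identified; a Girsanov change of measure absorbing the stochastic integral (exactly the device the paper uses in the proof of Proposition~\ref{prop:sublinear}) reduces the problem to pointwise maximization of $m_i(a)-\tfrac{\sigma^2}{2\delta_i}(a-\tfrac{\theta_i}{n}\sum_{j\neq i}\pi^j)^2$, whose unique maximizer is the constant $a^*$ under assumption~(b). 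That completes the argument without HJB.

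\textbf{A smaller slip.} The best-response formula you display mixes two stages of the computation: with $\sum_{j\neq i}\pi^j$ in the numerator the correct denominator is $n\sigma^2-2\alpha\delta_i$ (and assumption~(b), not~(a), guarantees it is nonzero); the denominator $(n+\theta_i)(n\sigma^2-\delta_i\alpha)-n\theta_i\delta_i\sigma^2$ appears only \emph{after} you substitute $\sum_{j\neq i}\pi^j=\sum_j\pi^j-\pi^{i,*}$ and solve for $\pi^{i,*}$, at which point the numerator carries $\sum_j\pi^j$. This is where assumption~(a) enters. The subsequent summation step is then exactly as you describe.
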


\begin{proof}
Let $i\in \{ 1,\ldots,n\}$ be arbitrary but fixed and assume that the other agents use constant strategies $\pi^j$, $j\neq i$, which will also be assumed to be arbitrary but fixed. Now define the stochastic process $(Y_t^{-i})_{t\in [0,T]}$ by $Y_t^{-i} = \prod_{j\neq i} X_t^{j,\pi^j}$, $t\in [0,T]$. \\

At first, we determine the dynamics of the process  $\Big((Y_t^{-i})^{-\frac{\theta_i}{n}}\Big)_{t\in [0,T]}$. To simplify our calculations, we first consider the logarithm of this process. We obtain 
\begin{equation} \label{eq: log Y}
\ln\Big((Y_t^{-i})^{-\frac{\theta_i}{n}}\Big) = -\frac{\theta_i}{n}\sum_{j\neq i} \ln\Big(X_t^{j,\pi^j}\Big)
\end{equation}
for $t\in [0,T]$. The It\^{o}-Doeblin formula implies
\begin{align*}
\dx \ln\Big(X_t^{j,\pi^j}\Big) &= \pi^j ((\mu + \alpha \bar{\pi}_t)\dx t + \sigma \dx W_t)-\frac{\sigma^2}{2}(\pi^j)^2  \dx t.
\end{align*} 
Hence, using \eqref{eq: log Y}, 
\begin{align*}
\dx \Big(\ln\Big((Y_t^{-i})^{-\frac{\theta_i}{n}}\Big)\Big) = - \frac{\theta_i}{n}\sum_{j\neq i} \pi^j ((\mu + \alpha \bar{\pi}_t)\dx t + \sigma \dx W_t)+\frac{\theta_i}{n}\frac{\sigma^2}{2}\sum_{j\neq i}(\pi^j)^2  \dx t.
\end{align*}
Using the It\^{o}-Doeblin formula a second time then yields 
\begin{align*}
&\dx \Big((Y_t^{-i})^{-\frac{\theta_i}{n}}\Big) = \dx \Big( \exp\Big( \ln \Big((Y_t^{-i})^{-\frac{\theta_i}{n}}\Big)\Big) \Big)\\
= &\,(Y_t^{-i})^{-\frac{\theta_i}{n}}\left(-\frac{\theta_i}{n}\sum_{j\neq i} \pi^j ((\mu + \alpha \bar{\pi}_t)\dx t + \sigma \dx W_t) + \frac{\sigma^2}{2}\frac{\theta_i}{n}\sum_{j\neq i} (\pi^j)^2 \dx t + \frac{\sigma^2}{2} \Big( \frac{\theta_i}{n}\Big)^2 \Big(\sum_{j\neq i} \pi^j \Big)^2 \dx t  \right).
\end{align*}

Hence, we can use partial integration to find the dynamics of the process associated to the argument of the utility function in \eqref{eq:problem price impact power}:
\begin{align*}
& \dx \Big(\Xpi{i}{t} (Y_t^{-i})^{-\frac{\theta_i}{n}} \Big) \\
= &\Xpi{i}{t} \left(Y_t^{-i}\right)^{-\frac{\theta_i}{n}} \Bigg(-\frac{\theta_i}{n}\sum_{j\neq i} \pi^j ((\mu + \alpha \bar{\pi}_t) \dx t + \sigma \dx W_t ) + \frac{\sigma^2}{2} \frac{\theta_i}{n}\sum_{j\neq i} (\pi^j)^2 \dx t + \frac{\sigma^2}{2}\Big(\frac{\theta_i}{n}\Big)^2 \Big(\sum_{j\neq i} \pi^j \Big)^2 \dx t\\
& + \pi^i_t ((\mu + \alpha \bar{\pi}_t) \dx t + \sigma\dx W_t) - \frac{\theta_i}{n}\sigma^2\pi^i _t\sum_{j\neq i}\pi^j \dx t\Bigg)\\
= &\Xpi{i}{t} \left(Y_t^{-i}\right)^{-\frac{\theta_i}{n}} \Bigg( \pi^i_t (\mu \dx t + \sigma \dx W_t) + \frac{\alpha}{n}(\pi^i_t)^2 \dx t + \Big(\frac{\alpha}{n} - \frac{\alpha \theta_i}{n^2}- \frac{\theta_i}{n}\sigma^2 \Big)\pi^i_t \sum_{j\neq i} \pi^j \dx t \\
& + \Big(\sum_{j\neq i}\pi^j \Big)^2 \frac{\theta_i}{n}\Big(\frac{\theta_i}{2n}\sigma^2 - \frac{\alpha}{n} \Big)\dx t - \frac{\theta_i}{n}\sum_{j\neq i} \pi^j (\mu \dx t + \sigma \dx W_t) + \frac{\theta_i}{2n}\sigma^2 \sum_{j\neq i}(\pi^j)^2 \dx t\Bigg),
\end{align*}
where we used the last step to separate the summands depending on $\pi^i$ from the ones that do not depend on $\pi^i$. Now a simple calculation yields that we can rewrite 
\begin{equation}
\Xpi{i}{t}\cdot \left(Y_t^{-i}\right)^{-\frac{\theta_i}{n}} = \widetilde{X}^{i,\pi^i}_t \cdot \left(\widetilde{Y}^{-i}_t\right)^{-\frac{\theta_i}{n}},\label{eq:rewrite_product}
\end{equation}
where the process $\widetilde{Y}^{-i}$ does not depend on $\pi^i$. More specifically, the dynamics of $\widetilde{X}^{i,\pi^i}$ and $\widetilde{Y}^{-i}$ are given by 
\begin{align*}
\dx \widetilde{X}^{i,\pi^i}_t &= \widetilde{X}^{i,\pi^i}_t \pi^i_t \Bigg(\Big(\mu + \frac{\alpha}{n}\pi^i_t + \frac{\alpha}{n} \Big(1-\frac{\theta_i}{n}\Big) \sum_{j\neq i} \pi^j\Big)\dx t + \sigma \dx W_t\Bigg),\\
\dx \widetilde{Y}^{-i}_t &= \widetilde{Y}^{-i}_t \Bigg(\sum_{j\neq i} \pi^j \Big(\Big(\mu + \frac{\alpha}{n}  \sum_{j\neq i} \pi^j
\Big)\dx t + \sigma \dx W_t\Big) + \frac{\sigma^2}{2}\Big( \sum_{j\neq i}\pi^j\Big)^2\dx t - \frac{\sigma^2}{2}\sum_{j\neq i}(\pi^j)^2 \dx t\Bigg)
\end{align*}
with $\widetilde{X}_0^{i,\pi^i} = x_0^i,\, \widetilde{Y}_0^{-i} = \prod_{j\neq i} x_0^j.$\medskip

The previously introduced processes $\widetilde{X}^{i,\pi^i}$ and $\widetilde{Y}^{-i}$ simplify the derivation of the HJB-equation in this setting. In order to derive an HJB-equation, we define the following value function ($t\in [0,T],\, x,y\in (0,\infty)$)
\begin{align*}
v(t,x,y) &\coloneqq \sup_{\pi^i\in \mathcal{A}} \E\left[\frac{\delta_i}{\delta_i-1} \left(\widetilde{X}^{i,\pi^i}_T \left(\widetilde{Y}^{-i}_T\right)^{-\frac{\theta_i}{n}}\right)^{\frac{\delta_i-1}{\delta_i}}\Bigg|\, \widetilde{X}^{i,\pi^i}_t =x,\, \widetilde{Y}^{-i}_t = y \right].
\end{align*}
We can derive an HJB equation using classical arguments (see e.g. \cite{pham2009continuous}, \cite{bjork2004arbitrage}, \cite{fleming2006controlled}) and obtain

\begin{align*}
0 =& v_t + y v_y \Bigg\{ \sum_{j\neq i}\pi^j \Big( \mu + \frac{\alpha}{n} \sum_{j\neq i}\pi^j\Big) + \frac{\sigma^2}{2}\Big(\sum_{j\neq i} \pi^j \Big)^2-\frac{\sigma^2}{2}\sum_{j=1}^n (\pi^j)^2\Bigg\} + \frac{\sigma^2}{2}y^2 v_{yy} \Big(\sum_{j\neq i} \pi^j \Big)^2\\
&+ \sup_{\pi^i\in \R}\Bigg\{x v_x \pi^i \mu +  \left( \frac{\alpha}{n}  \left(1-\frac{\theta_i}{n}\right)x v_x + \sigma^2xy v_{xy} \right)\pi^i \sum_{j\neq i} \pi^j  +\Big(\frac{\alpha}{n}xv_x + \frac{\sigma^2}{2} x^2 v_{xx}\Big) (\pi^i)^2 \Bigg\},
\end{align*}
where we omitted the arguments of $v$ and its derivatives for notational convenience. The supremum is attained at 
\begin{equation}
    \pi^{i,*} = -\frac{xv_x \mu + \Big(\frac{\alpha}{n}\Big(1-\frac{\theta_i}{n}\Big)xv_x + \sigma^2 xy v_{xy} \Big)\sum_{j\neq i} \pi^j}{2\Big(\frac{\alpha}{n}xv_x + \frac{\sigma^2}{2}x^2 v_{xx} \Big)},
\end{equation}
which reduces the HJB equation to the PDE 

\begin{align*}
0 =& v_t + y v_y \Bigg\{ \sum_{j\neq i}\pi^j \Big( \mu + \frac{\alpha}{n} \sum_{j\neq i}\pi^j\Big) + \frac{\sigma^2}{2}\Big(\sum_{j\neq i} \pi^j \Big)^2 -\frac{\sigma^2}{2}\sum_{j=1}^n (\pi^j)^2\Bigg\} + \frac{\sigma^2}{2}y^2 v_{yy} \Big(\sum_{j\neq i} \pi^j \Big)^2\\
&-\frac{\Big(xv_x \mu + \Big(\frac{\alpha}{n}\Big(1-\frac{\theta_i}{n}\Big)xv_x + \sigma^2 xy v_{xy} \Big)\sum_{j\neq i} \pi^j\Big)^2}{4\Big(\frac{\alpha}{n}xv_x + \frac{\sigma^2}{2}x^2 v_{xx} \Big)}
\end{align*}
with terminal condition 
\begin{equation}
    v(T,x,y) = \frac{\delta_i}{\delta_i-1}\Big(x y^{-\frac{\theta_i}{n}} \Big)^\frac{\delta_i-1}{\delta_i},\, x,y > 0.
\end{equation}

For the solution, we make the following ansatz for $v$
\begin{equation}
    v(t,x,y) = f(t) \frac{\delta_i}{\delta_i-1}\Big(x y^{-\frac{\theta_i}{n}} \Big)^\frac{\delta_i-1}{\delta_i}
\end{equation}
for some continuously differentiable function $f:[0,T]\to (0,\infty)$ with $f(T)=1$. Hence, inserting the ansatz for $v$ reduces the HJB equation to the ODE
\begin{align}\label{eq:ode_power}
    0 = f'(t) + \rho f(t)
\end{align}
with terminal condition $f(T)=1$, where we defined the constant
\begin{align*}
    \rho = &-\frac{\theta_i}{n}\frac{\delta_i-1}{\delta_i}\Bigg(\sum_{j\neq i} \pi^j \Big(\mu + \frac{\alpha}{n}\sum_{j\neq i} \pi^j \Big) + \frac{\sigma^2}{2}\Big(\sum_{j\neq i} \pi^j \Big)^2 -\frac{\sigma^2}{2}\sum_{j=1}^n (\pi^j)^2 \Bigg) \\
    &+ \frac{\sigma^2}{2}\frac{\theta_i}{n}\frac{\delta_i-1}{\delta_i}\Big(1+\frac{\theta_i}{n}\frac{\delta_i-1}{\delta_i}\Big)\Big(\sum_{j\neq i}\pi^j\Big)^2- \frac{\Big(n\delta_i \mu + \Big(\alpha\delta_i\Big(1-\frac{\theta_i}{n} \Big) - \sigma^2 \theta_i(\delta_i-1) \Big)\sum_{j\neq i} \pi^j\Big)^2}{4\Big( n\sigma^2 - 2\alpha \delta_i\Big)^2}.
\end{align*}

The unique solution to \eqref{eq:ode_power} is given by 
\begin{equation}
    f(t) = e^{\rho(T-t)},\, t\in [0,T].
\end{equation}

Inserting the solution $v$ of the HJB equation into the maximizer $\pi^{i,*}$ yields 

\begin{equation}\label{eq:sol_br_power}
    \pi^{i,*} = \frac{n\delta_i\mu + \Big(\alpha\delta_i\Big(1-\frac{\theta_i}{n}\Big) - \sigma^2 \theta_i(\delta_i-1) \Big)\sum_{j\neq i}\pi^j}{n\sigma^2 - 2\alpha \delta_i}.
\end{equation}

Application of a standard verification theorem (see for example \cite{pham2009continuous}, \cite{fleming2006controlled}, \cite{bjork2004arbitrage} for similar arguments) implies that $\pi^{i,*}$ is the unique solution to the best response problem. Moreover, since $\pi^j$ were assumed to be constant, $\pi^{i,*}$ is constant as well. To conclude the proof, we need to solve the system of linear equations defined by \eqref{eq:sol_br_power} for $i=1,\ldots,n$. By adding an appropriate multiple of $\pi^{i}$ on both sides and simplifying the equation, we obtain 
\begin{equation}\label{eq:pi_i_power}
    \pi^i = \frac{n\delta_i\mu }{(n+\theta_i)\Big(\sigma^2 - \frac{\delta_i\alpha}{n} \Big) - \sigma^2 \theta_i\delta_i}  + \frac{\alpha\delta_i\Big(1-\frac{\theta_i}{n} \Big) - \sigma^2\theta_i(\delta_i-1) }{(n+\theta_i)\Big(\sigma^2 - \frac{\delta_i\alpha}{n} \Big) - \sigma^2\theta_i\delta_i}\sum_{j=1}^n \pi^j.
\end{equation}
Summing over all $i\in \{1,\ldots,n\}$ on both sides and solving for $\sum_{j=1}^n \pi^j$ then yields 
\begin{equation}\label{eq:sum_power}
    \sum_{j=1}^n \pi^j = \Bigg(1- \sum_{j=1}^n \frac{\alpha\delta_j\Big(1-\frac{\theta_j}{n}\Big) - \sigma^2\theta_j (\delta_j-1)}{(n+\theta_j)\Big(\sigma^2 - \frac{\delta_j \alpha}{n}\Big) - \sigma^2 \theta_j\delta_j}  \Bigg)^{-1}\sum_{j=1}^n \frac{n\delta_j\mu}{(n+\theta_j)\Big(\sigma^2 - \frac{\delta_j \alpha}{n}\Big)-\sigma^2\theta_j \delta_j }
\end{equation}
Finally, inserting \eqref{eq:sum_power} into \eqref{eq:pi_i_power} yields the unique constant Nash equilibrium given by ($i=1,\ldots,n$)
\begin{align*}
    \pi^{i,*} = &\frac{n\delta_i\mu }{(n+\theta_i)\Big(\sigma^2 - \frac{\delta_i\alpha}{n} \Big) - \sigma^2 \theta_i\delta_i}  + \frac{\alpha\delta_i\Big(1-\frac{\theta_i}{n} \Big) - \sigma^2\theta_i(\delta_i-1) }{(n+\theta_i)\Big(\sigma^2 - \frac{\delta_i\alpha}{n} \Big) - \sigma^2\theta_i\delta_i}\\
    &\cdot \Bigg(1- \sum_{j=1}^n \frac{\alpha\delta_j\Big(1-\frac{\theta_j}{n}\Big) - \sigma^2\theta_j (\delta_j-1)}{(n+\theta_j)\Big(\sigma^2 - \frac{\delta_j \alpha}{n}\Big) - \sigma^2 \theta_j\delta_j}  \Bigg)^{-1}\sum_{j=1}^n \frac{n\delta_j\mu}{(n+\theta_j)\Big(\sigma^2 - \frac{\delta_j \alpha}{n}\Big)-\sigma^2\theta_j \delta_j}.
\end{align*}

\end{proof}

\begin{remark}\label{remark:special_pow}
Similar to Remark \ref{remark:special_exp}, Theorem \ref{thm:price impact power constant} contains the special cases $\alpha=0$ and $\theta_i = 0$, $i=1,\ldots,n$. For $\alpha=0$ (no price impact), we deduce
\begin{equation}
    \pi^{i,*}= \Bigg(\frac{n\delta_i}{n+\theta_i(1-\delta_i)} + \frac{\theta_i(1-\delta_i)}{n+\theta_i(1-\delta_i)}\cdot \frac{\sum_{j=1}^n\frac{n\delta_j}{n+\theta_j(1-\delta_j)}}{1-\sum_{j=1}^n \frac{\theta_j(1-\delta_j)}{n+\theta_j(1-\delta_j)}}\Bigg)\cdot \frac{\mu}{\sigma^2},
\end{equation}
$i=1,\ldots,n$. In the special case without relative concerns inside the objective function, we obtain
\begin{equation}
    \pi^{i,*}=\frac{n\delta_i\mu}{n\sigma^2 - \delta_i \alpha} + \frac{\alpha\delta_i}{n\sigma^2 - \delta_i \alpha}\cdot \frac{\sum_{j=1}^n \frac{n\delta_j \mu }{n\sigma^2 - \alpha\delta_j}}{1-\sum_{j=1}^n \frac{\alpha\delta_j}{n\sigma^2 - \alpha\delta_j}},
\end{equation}
$i=1,\ldots,n.$ A comparison with Remark \ref{remark:special_exp} shows that the Nash equilibria in the special case of $\theta_i = 0$ for all $i=1,\ldots,n$ are actually the same, although $\pi^{i,*}$ represents the invested amount for exponential and the invested fraction for power utility. 
\end{remark}

In Subsection \ref{subsec:alpha}, we analyzed the influence of the price impact parameter $\alpha$ on the entries of the constant Nash equilibrium in terms of invested amounts under exponential utility. Due to the more complicated structure of the constant Nash equilibrium in Theorem \ref{thm:price impact power constant}, we do not discuss the influence of $\alpha$ as detailed as in Subsection \ref{subsec:alpha}. However, Theorem \ref{thm:price impact power constant} enables us to explicitly compute the value of a component of the Nash equilibrium for specific parameter choices. The results are illustrated in  Figure \ref{fig:price_impact_power}.

\begin{figure}[!h]
\centering
\includegraphics[scale=1]{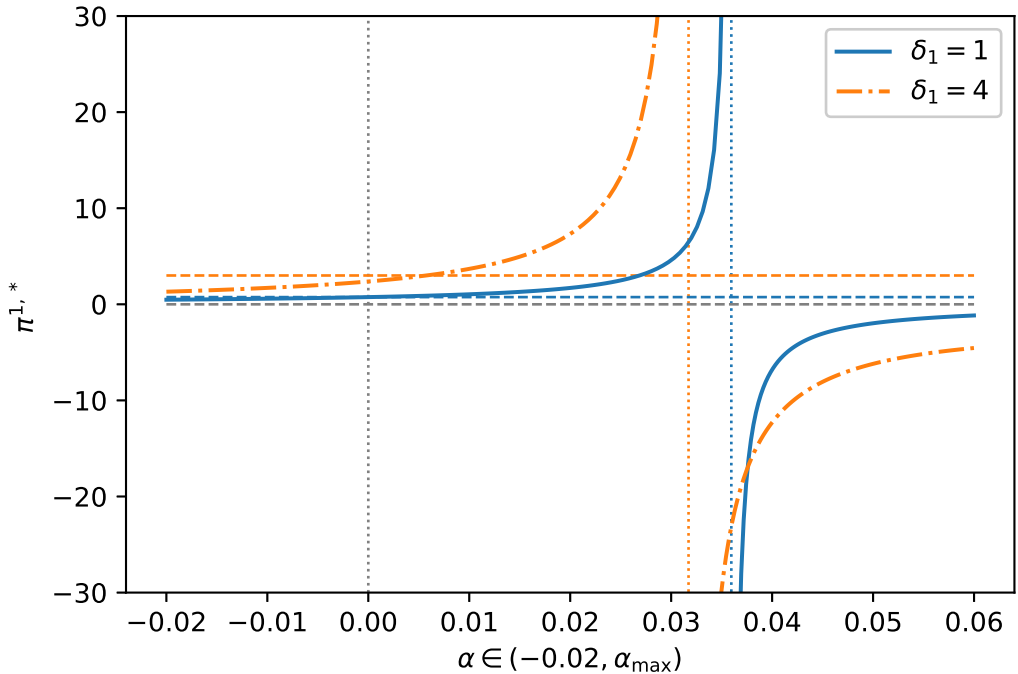}
\caption{Illustration of $\pi^{1,*}$ from Theorem \ref{thm:price impact power constant} in terms of $\alpha\in (-0.02,\alpha_{\text{max}})$ for $n=12$, $\mu = 0.03$, $\sigma = 0.2$, and $\alpha_{\text{max}} = n \sigma^2 / 8=0.06$. Further, $\theta_1 = 0.3,\, \delta_1\in \{1,4\}$ and the parameters $\theta_j$ and $ \delta_j$, $j\geq 2$ are increasing from $0$ to $1$ with step size $0.1$, and from $0.5$ to $2.7$ by step size $0.2$, respectively. The dashed blue and orange horizontal lines represent the optimal investment fraction without price impact, given by $\delta_1 \mu\sigma^{-2}.$}
\label{fig:price_impact_power}
\end{figure}

Figure \ref{fig:price_impact_power} displays the first component $\pi^{1,*}$ of the constant Nash equilibrium given in Theorem~\ref{thm:price impact power constant} in terms of $\alpha$ varying between $-0.02$ and $\alpha_\text{max}$ for the two different risk tolerance parameters $\delta_1 =1 $ and $\delta_1 = 4$. The expression $\alpha_\text{max}$ is defined analogously to Subsection \ref{subsec:alpha} as 
\begin{equation}
\amax = \frac{n\sigma^2}{2\dmax },
\end{equation}
where $\dmax = \max\{\delta_1,\ldots,\delta_n\}$. In the example displayed in Figure \ref{fig:price_impact_power}, we used $\dmax = 4$. The market parameters are chosen as $\mu = 0.03$ and $\sigma = 0.2$. Note that all considered parameter combinations satisfy the conditions of Theorem \ref{thm:price impact power constant}. Similar to Figure \ref{fig:pi_CARA}, we observe a discontinuity of $\pi^{1,*}$. In Subsection \ref{subsec:alpha}, we provided a detailed discussion of the existence of a unique point of discontinuity. Here, we only give a short explanation regarding the discontinuity. For the specific parameter choices used in the example, conditions a) and b) of Theorem \ref{thm:price impact power constant} are always satisfied. The discontinuity is due to condition c), i.e., for both parameter choices $\delta_1 \in \{1,4\}$, there exists a unique value $\alpha_0 \in (-\infty, \amax)$ such that the expression in condition~c) is zero. In the figure, the value $\alpha_0$ is highlighted by a vertical dotted line for each of the two parameter choices $\delta_1 \in \{1,4\}$. Moreover, the blue and orange horizontal dashed lines mark the Merton ratio $\delta_1 \mu \sigma^{-2}$, i.e., the unique optimally invested fraction in the associated classical problem ($\alpha = 0$, $\theta_1 = 0$), for the two different values used for $\delta_1$. Finally, we highlighted the value zero on both axes by a grey line. \medskip

Considering the behavior of $\pi^{1,*}$ in terms of $\alpha$, we notice that $\pi^{1,*}$ is strictly positive for $\alpha<\alpha_0$ and strictly negative for $\alpha> \alpha_0$. Moreover, we observe that for larger price impact (i.e., if the absolute value of $\alpha$ increases), the agents engage less in the financial market which is represented by a decrease in the absolute value of $\pi^{1,*}$. Overall, we notice a similar behavior of $\pi^{1,*}$ in terms of $\alpha$ as in the case of exponential utility which we considered in Subsection \ref{subsec:alpha}.

\begin{remark}
    In contrast to the discussion of nonlinear price impact in the CARA case (see Section \ref{sec:nonlinear}), we only consider linear price impact for CRRA utility. In the CARA case, relative concerns are included into the objective function linearly which simplifies the problem in comparison to the CRRA case, in which the relative concerns are included multiplicatively. It seems to us that the treatment of nonlinear price impact would be a lot more tedious in the CRRA model. Moreover, we expect that the gain of insight would be minimal as a similar threshold phenomenon should appear in the CRRA case. To understand this conjecture, consider the terminal wealth of agent $i$ in the CRRA model with nonlinear price impact
\begin{equation*}
    X_T^{i,\pi^i} = x_0^i \exp\left(\int_0^T \pi_t^i\big( (\mu + g(\bar{\pi}_t)) dt + \sigma dW_t \big) - \frac{1}{2}\int_0^T \sigma^2 (\pi_t^i)^2 dt\right).
\end{equation*}

It appears that, as long as $g$ grows at most linearly, the increase in the drift is balanced by the quadratic influence in the second integral. If, however, $g$ grows superlinearly, we expect the first integral in the exponential to be dominant, resulting in an unbounded best response problem and, thus, no Nash equilibrium in this case. 
\end{remark}

\section{Conclusion}
In this paper we derive Nash equilibria for agents with relative performance measures in financial markets with price impact. We show that as long as the price impact is not more than linear, the individual optimization problems are well-defined. Whereas without price impact, the agents would always invest a positive amount in the stock in our model, the situation changes dramatically when a price impact is present. Then there exists a critical number for the price impact variable where the Nash equilibrium changes from a situation where all investors try to increase the stock to a situation where they try to decrease the stock.\\
%beyond a certain threshold. Then the agents massively short-sell the stock and try to take advantage of falling prices. Thus, a larger price impact leads to a more aggressive behavior of the agents in our model.\\  

\textit{Acknowledgment:} The authors would like to thank Dirk Becherer and Johannes Muhle-Karbe for helpful discussions and hints to literature. Further, they are grateful to two anonymous referees for their suggestions which helped to improve the paper.

\textit{Statements and Declarations:} The authors have no relevant financial or non-financial interests to disclose. Data sharing is not applicable to this article as no datasets were generated or analyzed during the current study.

\bibliographystyle{amsplain}
\bibliography{literatur_main}

\providecommand{\bysame}{\leavevmode\hbox to3em{\hrulefill}\thinspace}
\providecommand{\MR}{\relax\ifhmode\unskip\space\fi MR }
% \MRhref is called by the amsart/book/proc definition of \MR.
\providecommand{\MRhref}[2]{%
  \href{http://www.ams.org/mathscinet-getitem?mr=#1}{#2}
}
\providecommand{\href}[2]{#2}
\begin{thebibliography}{10}

\bibitem{almgren2001optimal}
Robert Almgren and Neil Chriss, \emph{Optimal execution of portfolio
  transactions}, Journal of Risk \textbf{3} (2001), 5--40.

\bibitem{bank2004hedging}
Peter Bank and Dietmar Baum, \emph{Hedging and portfolio optimization in
  financial markets with a large trader}, Mathematical Finance: An
  International Journal of Mathematics, Statistics and Financial Economics
  \textbf{14} (2004), no.~1, 1--18.

\bibitem{bank2023optimal}
Peter Bank and Yan Dolinsky, \emph{Optimal investment with a noisy signal of
  future stock prices},
  \href{https://arxiv.org/abs/2302.10485}{arXiv:2302.10485\hspace*{-1mm}}
  (2023).

\bibitem{basak2014strategic}
Suleyman Basak and Dmitry Makarov, \emph{Strategic asset allocation in money
  management}, The Journal of finance \textbf{69} (2014), no.~1, 179--217.

\bibitem{basak2015competition}
\bysame, \emph{Competition among portfolio managers and asset specialization},
  Paris December 2014, Finance Meeting EUROFIDAI-AFFI Paper (2015).

\bibitem{bauerle2021nash}
Nicole B{\"a}uerle and Tamara G{\"o}ll, \emph{Nash equilibria for relative
  investors via no-arbitrage arguments}, Mathematical Methods of Operations
  Research (2022), 1--23.

\bibitem{bertsimas1998optimal}
Dimitris Bertsimas and Andrew~W. Lo, \emph{Optimal control of execution costs},
  Journal of Financial Markets \textbf{1} (1998), no.~1, 1--50.

\bibitem{bjork2004arbitrage}
Tomas Bj{\"o}rk, \emph{Arbitrage theory in continuous time}, 2. ed., Oxford
  University Press, 2004.

\bibitem{bouchaud2009price}
Jean-Philippe Bouchaud, \emph{Price impact}, arXiv:0903.2428 (2009).

\bibitem{brown2001careers}
Stephen~J. Brown, William~N. Goetzmann, and James Park, \emph{Careers and
  survival: Competition and risk in the hedge fund and {CTA} industry}, The
  Journal of Finance \textbf{56} (2001), no.~5, 1869--1886.

\bibitem{cronqvist2008large}
Henrik Cronqvist and R{\"u}diger Fahlenbrach, \emph{Large shareholders and
  corporate policies}, The Review of Financial Studies \textbf{22} (2009),
  no.~10, 3941--3976.

\bibitem{cuoco1998optimal}
Domenico Cuoco and Jak{\v{s}}a Cvitani{\'c}, \emph{Optimal consumption choices
  for a ‘large’investor}, Journal of Economic Dynamics and Control
  \textbf{22} (1998), no.~3, 401--436.

\bibitem{curatola2019portfolio}
Giuliano Curatola, \emph{Portfolio choice of large investors who interact
  strategically}, Available at SSRN 3404491 (2019).

\bibitem{curatola2021price}
\bysame, \emph{Price impact, strategic interaction and portfolio choice}, The
  North American Journal of Economics and Finance \textbf{59} (2022), 101594.

\bibitem{cvitanic1996hedging}
Jak{\v{s}}a Cvitani{\'c} and Jin Ma, \emph{Hedging options for a large investor
  and forward-backward sde's}, The Annals of Applied Probability \textbf{6}
  (1996), no.~2, 370--398.

\bibitem{dos2019forward}
Goncalo Dos~Reis and Vadim Platonov, \emph{Forward utilities and mean-field
  games under relative performance concerns}, From Particle Systems to Partial
  Differential Equations: International Conference, Particle Systems and PDEs
  VI, VII and VIII, 2017-2019 (Cham) (C{\'e}dric Bernardin, Fran{\c{c}}ois
  Golse, Patr{\'i}cia Gon{\c{c}}alves, Valeria Ricci, and Ana~Jacinta Soares,
  eds.), vol. 352, Springer, 2021, pp.~227--251.

\bibitem{reis2020forward}
\bysame, \emph{Forward utility and market adjustments in relative
  investment-consumption games of many players}, SIAM Journal on Financial
  Mathematics \textbf{13} (2022), no.~3, 844--876.

\bibitem{eksi2017portfolio}
Zehra Eksi and Hyejin Ku, \emph{Portfolio optimization for a large investor
  under partial information and price impact}, Mathematical Methods of
  Operations Research \textbf{86} (2017), 601--623.

\bibitem{espinosa2010stochastic}
Gilles-Edouard Espinosa, \emph{Stochastic control methods for optimal portfolio
  investment}, Ph.D. thesis, Ecole Polytechnique Paris, 2010.

\bibitem{espinosa2015optimal}
Gilles-Edouard Espinosa and Nizar Touzi, \emph{Optimal investment under
  relative performance concerns}, Mathematical Finance \textbf{25} (2015),
  no.~2, 221--257.

\bibitem{fleming2006controlled}
Wendell~H Fleming and Halil~Mete Soner, \emph{Controlled markov processes and
  viscosity solutions}, vol.~25, Springer Science \& Business Media, 2006.

\bibitem{fu2023mean2}
Guanxing Fu, \emph{Mean field portfolio games with consumption}, Mathematics
  and Financial Economics \textbf{17} (2023), no.~1, 79--99.

\bibitem{fu2021mean}
Guanxing Fu, Paulwin Graewe, Ulrich Horst, and Alexandre Popier, \emph{A mean
  field game of optimal portfolio liquidation}, Mathematics of Operations
  Research \textbf{46} (2021), no.~4, 1250--1281.

\bibitem{fu2020mean}
Guanxing Fu, Xizhi Su, and Chao Zhou, \emph{Mean field exponential utility
  game: A probabilistic approach}, arXiv:2006.07684 (2020).

\bibitem{fu2023mean}
Guanxing Fu and Chao Zhou, \emph{Mean field portfolio games}, Finance and
  Stochastics \textbf{27} (2023), no.~1, 189--231.

\bibitem{goell2023expected}
Tamara Göll, \emph{Expected utility maximzation for competitive agents}, Ph.D.
  thesis, Karlsruhe Institute of Technology, 2023,
  \href{https://doi.org/10.5445/IR/1000167954}{DOI:10.5445/IR/1000167954}.

\bibitem{he2005dynamic}
Hua He and Harry Mamaysky, \emph{Dynamic trading policies with price impact},
  Journal of Economic Dynamics and Control \textbf{29} (2005), no.~5, 891--930.

\bibitem{hu2021n}
Ruimeng Hu and Thaleia Zariphopoulou, \emph{$n$-player and mean-field games in
  {I}t\^{o}-diffusion markets with competitive or homophilous interaction},
  Stochastic Analysis, Filtering, and Stochastic Optimization, Springer, Cham,
  2022.

\bibitem{jarrow1992market}
Robert~A. Jarrow, \emph{Market manipulation, bubbles, corners, and short
  squeezes}, Journal of Financial and Quantitative Analysis \textbf{27} (1992),
  no.~3, 311--336.

\bibitem{jarrow1994derivative}
\bysame, \emph{Derivative security markets, market manipulation, and option
  pricing theory}, Journal of Financial and Quantitative Analysis \textbf{29}
  (1994), no.~2, 241--261.

\bibitem{kempf2008tournaments}
Alexander Kempf and Stefan Ruenzi, \emph{Tournaments in mutual-fund families},
  The Review of Financial Studies \textbf{21} (2008), no.~2, 1013--1036.

\bibitem{kraft2011large}
Holger Kraft and Christoph K{\"u}hn, \emph{Large traders and illiquid options:
  Hedging vs. manipulation}, Journal of Economic Dynamics and Control
  \textbf{35} (2011), no.~11, 1898--1915.

\bibitem{kraft2020dynamic}
Holger Kraft, Andr{\'e} Meyer-Wehmann, and Frank~Thomas Seifried, \emph{Dynamic
  asset allocation with relative wealth concerns in incomplete markets},
  Journal of Economic Dynamics and Control \textbf{113} (2020), 103857.

\bibitem{lacker2020many}
Daniel Lacker and Agathe Soret, \emph{Many-player games of optimal consumption
  and investment under relative performance criteria}, Mathematics and
  Financial Economics \textbf{14} (2020), no.~2, 263--281.

\bibitem{lacker_zariphopoulou_n-agent_nash}
Daniel Lacker and Thaleia Zariphopoulou, \emph{Mean field and $n$-agent games
  for optimal investment under relative performance criteria}, Mathematical
  Finance \textbf{29} (2019), no.~4, 1003--1038.

\bibitem{liu2005option}
Hong Liu and Jiongmin Yong, \emph{Option pricing with an illiquid underlying
  asset market}, Journal of Economic Dynamics and Control \textbf{29} (2005),
  no.~12, 2125--2156.

\bibitem{luo2019nash}
Xiangge Luo and Alexander Schied, \emph{Nash equilibrium for risk-averse
  investors in a market impact game with transient price impact}, Market
  Microstructure and Liquidity \textbf{5} (2019), no.~01n04.

\bibitem{muhle2022stochastic}
Johannes Muhle-Karbe, Zexin Wang, and Kevin Webster, \emph{Stochastic liquidity
  as a proxy for nonlinear price impact}, Operations Research (2023).

\bibitem{pham2009continuous}
Huy{\^e}n Pham, \emph{Continuous-time stochastic control and optimization with
  financial applications}, vol.~61, Springer Science \& Business Media, 2009.

\bibitem{schied2017high}
Alexander Schied, Elias Strehle, and Tao Zhang, \emph{High-frequency limit of
  {N}ash equilibria in a market impact game with transient price impact}, SIAM
  Journal on Financial Mathematics \textbf{8} (2017), no.~1, 589--634.

\bibitem{schied2017state}
Alexander Schied and Tao Zhang, \emph{A state-constrained differential game
  arising in optimal portfolio liquidation}, Mathematical Finance \textbf{27}
  (2017), no.~3, 779--802.

\bibitem{schied2019market}
\bysame, \emph{A market impact game under transient price impact}, Mathematics
  of Operations Research \textbf{44} (2019), no.~1, 102--121.

\bibitem{schoneborn2009liquidation}
Torsten Sch{\"o}neborn and Alexander Schied, \emph{Liquidation in the face of
  adversity: stealth vs. sunshine trading}, EFA 2008 Athens Meetings Paper,
  2009.

\bibitem{webster2023handbook}
Kevin~T. Webster, \emph{Handbook of price impact modeling}, 1. ed., CRC Press,
  Boca Raton, 2023.

\end{thebibliography}

\end{document}